\documentclass[11pt]{amsart}
\usepackage{amsmath,amssymb,amsthm}
\usepackage[margin=1.1in]{geometry}
\usepackage{booktabs,array}
\usepackage[colorlinks=true,linkcolor=blue,citecolor=blue,urlcolor=blue]{hyperref}
\hypersetup{pdftitle={The Hodge conjecture for Fermat fourfolds of odd degree at most 199}, pdfauthor={Rifat Jumagulov}}
\newtheorem{theorem}{Theorem}[section]
\newtheorem{lemma}[theorem]{Lemma}
\newtheorem{proposition}[theorem]{Proposition}
\newtheorem{corollary}[theorem]{Corollary}
\theoremstyle{remark}
\newtheorem{remark}[theorem]{Remark}
\newcommand{\Q}{\mathbb{Q}}
\newcommand{\Z}{\mathbb{Z}}
\newcommand{\F}{\mathbb{F}}
\newcommand{\PP}{\mathbb{P}}
\newcommand{\Ds}{\mathfrak{D}}
\newcommand{\fr}[1]{\langle #1\rangle}
\newcommand{\V}{V}
\DeclareMathOperator{\Gal}{Gal}
\newcommand{\zt}{\zeta}
\newcommand{\C}{\mathbb{C}}
\newcommand{\Qbar}{\overline{\Q}}

\title[Hodge classes on odd-degree Fermat fourfolds]{The Hodge conjecture for Fermat fourfolds
of odd degree at most 199}
\author{Rifat Jumagulov}
\email{jum.rifm@gmail.com}
\subjclass[2020]{Primary 14C30, 14C25; Secondary 14J70, 11T24}
\keywords{Hodge conjecture, Fermat varieties, algebraic cycles, Jacobi sums, computer-assisted
proof}
\date{July 2026}

\begin{document}
\raggedbottom\vfuzz=3pt
\begin{abstract}
Let $X^4_m=\{x_0^m+\dots+x_5^m=0\}\subset\PP^5$ be the Fermat fourfold of degree $m$. We give a
computer-assisted proof of the Hodge conjecture for $X^4_m$ for every odd $m\le199$: three
geometric closure criteria are combined with an exhaustive machine census of the Hodge
$(2,2)$-orbits, which is accompanied by a completeness proof and by re-verifiable per-orbit
certificates.

The three closure mechanisms for Hodge $(2,2)$-characters are as follows. First, if the
character multiset splits into two zero-sum triples, then the corresponding rational Hodge
block is transported from a $(1,1)$-substructure of a product of Fermat curves and is therefore
algebraic. Second, algebraicity follows for characters that, after adjoining two vanishing
pairs, decompose into an Aoki standard sextuple and a grade-$2$ Hodge quadruple. Third, the
exceptional class at $m=33$ has a lift to level $66$ that is quasi-decomposable; its
algebraicity then descends along the finite morphism $X^4_{66}\to X^4_{33}$.

Together with the known decomposable, quasi-decomposable, and standard cases, these criteria
cover all odd degrees $m\le199$. The census runs over the $89$ machine-examined levels
$21\le m\le199$, $m\neq23$ (smaller odd degrees and $m=23$ are classical), classifies all
$78{,}299$ Galois-orbit representatives, and isolates \emph{thirteen} orbits beyond
decomposability, quasi-decomposability and Aoki's standard cycles: six close by the $*$-split
criterion and the remaining \emph{seven} by the two-pair and level-lifted closures with level
transport, leaving \emph{none}. Each
classification carries a machine-checked witness --- positive for the classified orbits,
re-established negative screenings for the terminal ones --- The enumeration is reproduced by two further
implementations: an algorithmically independent census over all $89$ levels, and a direct
brute-force enumeration --- every sorted zero-sum sextuple generated exhaustively --- through
$m=143$, re-run live through $m=45$. Seven of the thirteen are gap
classes outside Aoki's standard lattice calculus --- the $m=33$ witness with its inflations at
$99,165$, the first $m=39$ orbit with $117,195$, and the $\Q(\zt_{21})$ orbit at $m=105$ ---
and are new to the author's knowledge; for the other six algebraicity is also derivable from
that calculus, the explicit presentations given here being the new content.
Finally, an exact Jacobi-sum computation at $p=67$ shows that no cycle defined over
$\Q(\zt_{33})$ projects nontrivially onto the exceptional $m=33$ block. More generally, for every finite
extension of $\Q(\zt_{33})$ over which a certifying cycle is defined, every residue degree
above $67$ is divisible by $6$.
\end{abstract}
\maketitle

\section{Introduction and main results}\label{sec:intro}
Let $X^4_m\subset\PP^5$ be the Fermat fourfold $x_0^m+\dots+x_5^m=0$, $G=\mu_m^6/\mu_m$, and for a
character $a=(a_0,\dots,a_5)\in(\Z/m)^6$ with $\sum a_i\equiv0$, $a_i\neq0$, let $\V(a)\subset
H^4_{\mathrm{prim}}$ be the ($1$-dimensional) eigenspace; $a$ is a Hodge $(2,2)$ character iff
$\sum_i\fr{ta_i}=3m$ for all $t\in(\Z/m)^\times$, where $\fr{x}\in\{1,\dots,m-1\}$ is the residue
(Shioda \cite{Shioda79}). The known algebraicity machinery is: (i)~the \emph{induced structure}
(Shioda \cite{Shioda79}, Shioda--Katsura \cite{SK}), which transports algebraicity through
decompositions---da Silva's conditions $(P1)/(P2)$ are quasi-decomposability in the semigroup $M_m$
\cite{daSilva}; and (ii)~\emph{standard cycles} (Aoki \cite{Aoki87}). da Silva \cite[Prop.~3.4 (= Prop.~3.6 of the arXiv version)]{daSilva}
found the first Hodge class beyond both at $m=33$ and proposed a candidate cycle $[W]$ (Question~1).
(His closures --- $m\le20$ [Thm.~2.7], $m$ prime or $m=4$ [Thm.~2.6], $m=p^2$ [Thm.~2.8],
$m=21,27$ [Thm.~3.3], and fourfolds of degree $m\le100$ coprime to $6$ [Prop.~3.1], numbering of
the published version \cite{daSilva} --- leave the composite non-square degrees with $3\mid m$, beyond
his $21,27$, untouched; every class closed below lies there.) In summary:
\begin{center}\footnotesize
\begin{tabular}{ll>{\raggedright\arraybackslash}p{4.9cm}}
\toprule
mechanism & source & covers\\
\midrule
induced structure & Shioda \cite{Shioda79}; Shioda--Katsura \cite{SK} & decomposable, quasi-decomposable\\
standard cycles; lattice calculus & Aoki \cite{Aoki87,Aoki00} & standard characters; $\nu\in S_m$; Thm.~0.1 degree forms\\
computational closures & da Silva \cite{daSilva} & $m\le20$; $m$ prime or $4$; $p^2$; $21,27$; $m\le100$ coprime to $6$\\
Theorems~\ref{thm:A}--\ref{thm:App} $+$ census & this paper & the residual odd orbits through $199$\\
\bottomrule
\end{tabular}
\end{center}
We work in his semigroup formalism. The main result is the following.

\begin{theorem}[main theorem]\label{thm:main}
For every odd $m$ with $3\le m\le199$ the Hodge conjecture holds for the Fermat fourfold
$X^4_m$.
\end{theorem}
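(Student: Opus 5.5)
The plan is to reduce the statement to a finite, orbit-by-orbit verification. For the Fermat fourfold, $H^4(X^4_m,\Q)$ is the hyperplane class plus $H^4_{\mathrm{prim}}$. The latter decomposes under $G$ into the lines $\V(a)$, and the rational Hodge classes are exactly the sums of the blocks $\bigoplus_{t\in(\Z/m)^\times}\V(ta)$ attached to Galois orbits of Hodge $(2,2)$ characters. So it suffices to show the following: for every odd $m\le199$ and every $(\Z/m)^\times$-orbit of Hodge characters $a$, the rational block spanned by that orbit contains an algebraic class projecting nontrivially onto it. Irreducibility of the block as a rational $G$-representation then gives the whole block, since $G$ acts by algebraic correspondences. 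The hyperplane class is trivially algebraic.

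First I dispose of the classical range. For $m\le20$, for $m$ prime, for $m=p^2$, for $m\in\{21,27\}$, and for $m\le100$ coprime to $6$, I invoke da Silva's theorems quoted in the introduction; together with Shioda's result for $m=23$ (prime), this settles all odd $m<21$ and $m=23$.

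For the remaining $89$ levels $21\le m\le199$, $m\ne23$, I run the census. I enumerate orbit representatives of sorted zero-sum sextuples with nonzero entries and filter by the condition $\sum_i\fr{ta_i}=3m$ for all units $t$. Each surviving orbit is then classified in the following order: (1) decomposable or quasi-decomposable in $M_m$, which closes it by the induced structure of Shioda--Katsura; (2) standard in Aoki's sense, which closes it by standard cycles; (3) $*$-split into two zero-sum triples, closed by Theorem~\ref{thm:A} via a $(1,1)$-class on a product of Fermat curves; (4) two-pair decomposition into an Aoki standard sextuple plus a grade-$2$ Hodge quadruple, closed by the second criterion; (5) level-lifted, where a lift to level $2m$ (e.g.\ $33\to66$) is quasi-decomposable, so algebraicity descends along the finite map $X^4_{2m}\to X^4_m$ by pushforward, using that pushforward is surjective onto the invariant part.

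Inflations need separate handling. Orbits at $m'=km$ that are induced from level $m$ (such as those at $99,165$ coming from $33$) are closed by level transport along $X^4_{m'}\to X^4_m$ pullback, which is compatible with the character embedding $a\mapsto ka$. For each classified orbit I record a certificate: the explicit decomposition or lift, checkable in linear time. The claim is then that the terminal list after steps (1)--(5) is empty. Per the census, thirteen orbits survive (1)--(2); six are closed by (3) and seven by (4)--(5).

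The main obstacle is completeness rather than the closure theorems. I must make sure that the enumeration really produces every Hodge orbit, and that the negative screenings (non-decomposability and the like) are not false. The negative screenings do not affect the conclusion, but the orbit list must be exhaustive. For this I plan a proof that the canonical-representative enumeration hits every orbit exactly once. I will cross-check it against an independent implementation and against brute-force generation of all sorted zero-sum sextuples for small $m$ (through $m=143$). I will also check that the orbit count per level matches a count obtained from the Hodge-number formula for the $(2,2)$ part restricted to Hodge characters.
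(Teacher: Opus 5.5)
Your overall architecture is the paper's: the classical range from da Silva and Shioda, the $89$-level census with the cascade decomposable $\to$ quasi $\to$ standard $\to$ $*$-split, the two-pair and level-lifted closures plus inflation/descent for the rest, and a completeness argument. Your remark that the negative screenings are not needed for the conclusion is also correct. But step~(5), as you formulate it, fails for the one class it exists to close.

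The lift $2w=(2,8,32,44,50,62)$ of the $m=33$ class is \emph{not} quasi-decomposable at level $66$. Consider an adjoined pair $\{k,66-k\}$.
If $k$ is even, every entry is even, and a $(4,4)$ split into grade-$2$ quadruples halves to a quasi-decomposition of $w$ at level $33$; this does not exist. (Units mod $66$ and mod $33$ correspond, and $\fr{2x}_{66}=2\fr{x}_{33}$.)
If $k$ is odd, the self-pair $k=33$ included, each quadruple has integer sum a multiple of $66$, hence even. So $k$ and $66-k$ must lie in the same quadruple, and its other two entries then form a vanishing pair inside $2w$. That is impossible because $w$ is pair-free.
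So an engine testing ``the lift is quasi-decomposable'' leaves $m=33,99,165$ open.

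The missing idea is a two-pair closure \emph{at the doubled level} in which the sextuple factor is quasi-decomposable rather than standard. This uses the self-paired element $m/2$, which exists only at even level (Theorem~\ref{thm:App}):
\[
2w\uplus\{1,65\}\uplus\{25,41\}=Q\uplus S,\qquad Q=(1,25,44,62),\quad S=(2,8,32,41,50,65),
\]
with $Q$ grade-$2$ Hodge and $S\uplus\{33,33\}=(2,32,33,65)\uplus(8,33,41,50)$. The chain is then:
\begin{enumerate}
\item da Silva's Cor.~2.3(b) with $b_3=c_3=33$ gives $\mathrm{claim}(S)$;
\item Aoki's Thm.~1-4(i) gives $\mathrm{claim}(S*Q)$;
\item Thm.~1-4(ii) strips $\delta=(1,65,25,41)$;
\item Lemma~\ref{lem:descent} pushes the result down to $X^4_{33}$.
\end{enumerate}

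Two lesser points. First, you treat only $H^4$, but the theorem is the full Hodge conjecture. You need one more sentence: $H^2$ and $H^6$ of a smooth hypersurface in $\PP^5$ are spanned by powers of the hyperplane class, by the Lefschetz hyperplane theorem and Poincar\'e duality. The paper reaches the same point via Lefschetz $(1,1)$ and hard Lefschetz.
Second, the planned cross-check against a Hodge-number count does not work: $h^{2,2}$ counts characters of grade $3$ at $t=1$ only, not those of constant grade over all units. It is also unnecessary, since exhaustively enumerating sorted zero-sum sextuples and filtering by grade already gives completeness, as in Lemma~\ref{lem:complete}.
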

\noindent It follows from the three closure criteria below (Theorems~\ref{thm:A},
\ref{thm:Aprime}, \ref{thm:App}), the level-transport lemmas, and the census
(Theorem~\ref{thm:B}), which certifies that these criteria and the known machinery leave no
orbit uncovered; odd degrees below the census range are classical
(\cite[Thms.~2.6--2.7]{daSilva}: $m\le20$, and $m$ prime --- which covers $m=23$). What is genuinely new is delimited as follows: the
level-lifted closure of the $m=33$ class (Theorem~\ref{thm:App}) is a new mechanism; the
$*$-split criterion (Theorem~\ref{thm:A}) is a new criterion extracted from the classical
Shioda--Katsura correspondence; the two-pair presentations (Theorem~\ref{thm:Aprime}) are new
explicit applications of Aoki's calculus; and the census with its certificates is a new
computational classification. Of the orbits they close, those at $m=33$ (with $99,165$), the
first orbit at $m=39$ (with $117,195$), and the $\Q(\zt_{21})$ orbit at $m=105$ are gap
classes --- outside the standard lattice calculus --- while the second $m=39$ orbit (with $117,195$), $m=45$
(with $135$) and the $\Q(\zt_7)$ orbit at $m=105$ are also derivable from
\cite[Lem.~4.1(iii)]{Aoki00}, the contribution there being the explicit presentations. We prove:

\medskip
\noindent\emph{Notation.} Throughout $m\ge3$ ($m\le2$ gives a hyperplane or a smooth quadric,
where the conjecture is classical). A multiset $a$ of nonzero residues with $\sum a_i\equiv0\pmod m$ has
\emph{grade} $g$ if $\sum_i\fr{ta_i}=gm$ for \emph{every} $t\in(\Z/m)^\times$ (the constancy is the
Hodge condition: grade-$3$ sextuples are the Hodge $(2,2)$ characters of $X^4_m$, grade-$2$ quadruples
the $(1,1)$ characters of the surface $X^2_m$); $\sim$ is equality of multisets up to coordinate
permutation, and the \emph{content} of a class $a=(a_0,\dots,a_5)$ is $\gcd(a_0,\dots,a_5,m)$
(content-$e$ classes are
inflations from level $m/e$). $\mathrm{claim}_m(a)$ (or $\mathrm{claim}(a)$ when $m$ is clear) is Aoki's
\textsc{claim}$(a)$: $\V(a)$ is spanned by algebraic cycle classes \cite[Introduction]{Aoki87} ---
equivalently, some \emph{rational} algebraic cycle class has nonzero $\V(a)$-component,
$\omega_a(Z)\neq0$. Three levels are kept distinct throughout: eigenlines are indexed by
\emph{ordered} tuples ($\V(b)\neq\V(b')$ for distinct ordered characters even when they agree as
multisets; coordinate permutations are realized by automorphisms of $X^4_m$, which is why claim is
invariant under them, and it is likewise a Galois-orbit-level property); $M(a):=\bigoplus_b\V(b)$
is the rational Galois block, with rational form
$M(a)_\Q:=M(a)\cap H^4(X^4_m,\Q)$ (the sum is over a full Galois orbit, so
$M(a)_\Q\otimes\C=M(a)$; the same applies verbatim to the grade-$2$ blocks used below) --- the sum over the distinct \emph{ordered} conjugates $b=ta$,
$t\in(\Z/m)^\times$, their number being the index of the ordered stabilizer
$\{t:ta=a\text{ coordinatewise}\}$, \emph{not} the number of multiset classes; and the census
classifies sorted representatives modulo both actions --- legitimate since claim is invariant
under both, but never identified with an eigenline count. An unqualified \emph{class} always
means a Galois orbit of characters (equivalently its sorted canonical representative);
\emph{Hodge class} alone keeps its usual cohomological meaning. $\omega_a(Z)$ denotes the
$\V(a)$-component of the cycle class of $Z$
(Aoki's averaged projector \cite[Introduction]{Aoki87}). $*$ is Aoki's juxtaposition --- concatenation of the
entry tuples; characters of $X^r_m$ and $X^s_m$ concatenate to one of $X^{r+s+2}_m$ ($n=r+s+2$ in
Aoki's indexing by cohomological degree), grades adding \cite[\S1]{Aoki87} --- and, for even $s$, $\Ds^s_m$ his set of
$(s+2)$-entry characters that are coordinate permutations of $(a_0,-a_0,\dots,a_{s/2},-a_{s/2})$
($s/2+1$ vanishing pairs), represented by linear cycles \cite[\S1, Thm.~1-1]{Aoki87}; only $\Ds^2_m$
is used below. $M_m$ is da Silva's additive semigroup of solutions $(x_1,\dots,x_{m-1};y)$ in non-negative
integers $x_i$ (multiplicity vectors), $y>0$, of
$\sum_i\fr{ti}\,x_i=my$ for all $t$ (multiplicities intrinsic), $M_m(g)$ its grade-$g$ part; a class is
\emph{decomposable} if it splits in $M_m$ and \emph{quasi-decomposable} if it splits after adjoining
one element of $M_m(1)$ --- da Silva's $(P1)/(P2)$ \cite[Def.~2.4]{daSilva}. $\#$ is his pairing: for
$\beta=(b_0,\dots,b_{r+1})$, $\gamma=(c_0,\dots,c_{s+1})$ with nonzero entries and
$b_{r+1}+c_{s+1}\equiv0$ (the set $U^{r,s}_m$), $\beta\#\gamma=(b_0,\dots,b_r,c_0,\dots,c_s)$
\cite[before Cor.~2.3]{daSilva}.
$\sigma_{p,x}$ denotes the standard characters (Proposition~\ref{prop:D}). $u(a,p)$ is the scalar by
which geometric Frobenius at a split prime $p$ acts on $\V(a)(n/2)$ --- the Tate-normalized Jacobi sum
$j(a)/p^{n/2}$ (the Jacobi-sum count of diagonal hypersurfaces, \cite[pp.~500--502]{Weil49}; \cite{Weil52}) --- so a divisorial class has $u=1$ at split primes whose
Frobenius fixes the class of a certifying divisor (the second $m=39$ class of
\S\ref{sec:A} calibrates this in-text). Three symbols are kept
apart: $u(a,p)$ above is the \emph{local} normalized scalar; $\chi_a$ is the associated
\emph{global} Jacobi-sum Hecke character \emph{in its Tate-normalized form}: Weil's
unnormalized character $J_a$ \cite{Weil52} has $|J_a(\mathfrak p)|=N\mathfrak p^{\,n/2}$ in
every embedding ($n$ the cohomological degree of the ambient Fermat variety), so $J_a$ itself is
\emph{not} of finite order, and $\chi_a:=J_a/N^{n/2}$ is its finite-order part (grade
constancy makes the infinity type of $J_a$ the pure norm power $N^{n/2}$, which the
normalization removes) --- every use of
$\chi_a$ in this paper is of the normalized character, whose value at a split prime is the
local scalar $u(a,p)$; it is multiplicative under juxtaposition,
$\chi_{\alpha*\beta}=\chi_\alpha\chi_\beta$ (the normalizing powers add along the
weights; the machine-verified instances are the displayed $u$-identities); and
$\nu(a)\in\Z^{m-1}$ is Aoki's
\emph{multiplicity vector}, additive under juxtaposition,
$\nu(\alpha*\beta)=\nu(\alpha)+\nu(\beta)$ (Aoki writes $u$ for $\nu$). $S_m$ denotes
Aoki's lattice: the subgroup of $\Z^{m-1}$
generated by the $\nu(\delta)$ over the vanishing pairs $\delta=\{k,m-k\}$ and the
$\nu(\sigma_{p,x})$ over Aoki's standard elements for all primes $p\mid m$ --- for odd $p$ the AP
family of Proposition~\ref{prop:D}, at even $m$ also Aoki's $p=2$ family
$\sigma_{2,x}=(x,\,x+\frac m2,\,m-2x,\,\frac m2)$ \cite[\S2]{Aoki00} --- the ``standard
calculus''; this generating family is verbatim the generator set of Aoki's lattice $S_m$
(Proposition~\ref{prop:ident}), with membership decided exactly by integer linear
algebra (Hermite normal form), and $B^{4}_m$ the grade-$3$ Hodge-character set
of the Fermat fourfold at level $m$; in lattice statements we freely identify $a$ with $\nu(a)$
($a\in B^4_m\smallsetminus S_m$ means $\nu(a)\notin S_m$ --- a statement about the formal
calculus, not about the character group). The witness $w=(1,4,16,22,25,31)$ below is, up to coordinate permutation, the Galois
conjugate $7w$ of da Silva's displayed
representative $(7,10,13,19,22,28)\sim7w$ of \cite[Prop.~3.4 (= Prop.~3.6 of the arXiv version)]{daSilva}.

\medskip
\noindent\emph{Symbols at a glance.}
\begin{center}\small
\begin{tabular}{l>{\raggedright\arraybackslash}p{10.4cm}}
\toprule
$a=(a_0,\dots,a_5)$ & an \emph{ordered} character; $\V(a)$ its one-dimensional eigenline\\
$M(a)$ & the rational Galois block $\bigoplus_{t}\V(ta)$ (sum over distinct ordered conjugates)\\
$\mathrm{claim}(a)$ & $\V(a)$ is spanned by algebraic cycle classes\\
grade $g$ & $\sum_i\fr{ta_i}=gm$ for every unit $t$ (the Hodge condition)\\
$B^4_m$ & the grade-$3$ Hodge characters of $X^4_m$\\
$M_m$, $M_m(g)$ & da Silva's semigroup of solutions and its grade-$g$ part\\
$\Ds^s_m$ & Aoki's all-vanishing-pairs classes (linear cycles); only $\Ds^2_m$ is used\\
$*$, $\#$ & juxtaposition of characters; da Silva's pairing on $U^{r,s}_m$\\
$\sigma_{p,x}$ & Aoki's standard characters (Proposition~\ref{prop:D})\\
$\nu(a)\in\Z^{m-1}$ & the multiplicity vector (additive: $\nu(\alpha*\beta)=\nu(\alpha)+\nu(\beta)$)\\
$S_m$ & Aoki's lattice, spanned by the pair and standard vectors (Prop.~\ref{prop:ident})\\
\emph{gap class} & a Hodge class with $\nu\notin S_m$ (nonzero in Aoki's gap group $A_m/S_m$)\\
$J_a$, $\chi_a$ & the unnormalized and the Tate-normalized Jacobi-sum Hecke characters\\
$u(a,p)$ & the local scalar $j(a)/p^{n/2}$ on $\V(a)(n/2)$, $n=\#\text{entries}-2$
($n=4$ for sextuples, $n=2$ for grade-$2$ quadruples)\\
\bottomrule
\end{tabular}
\end{center}
\medskip

\begin{theorem}[$*$-split closure]\label{thm:A}
Let $a$ be a Hodge $(2,2)$ character of $X^4_m$, $m$ arbitrary, whose $6$-multiset splits as two zero-sum
triples $\beta'\uplus\gamma'$. Then the rational Hodge classes of $M(a)$ are algebraic:
each is the image, under the algebraic joining-line correspondence of the Shioda--Katsura
structure map, of a $\Q$-linear combination of divisor classes on $X^1_m\times X^1_m$
furnished by Lefschetz $(1,1)$ (no single ruled-surface representative is claimed).
\end{theorem}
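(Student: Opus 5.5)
The plan is to realize $M(a)$ as the image of a rational Hodge block on the surface $X^1_m\times X^1_m$ under the Shioda--Katsura inductive structure. On that surface Lefschetz $(1,1)$ makes every rational Hodge class algebraic. The algebraic correspondence then carries those classes to $X^4_m$.

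\textbf{Step 1 (the two triples).} Write $a\sim\beta'\uplus\gamma'$ with $\beta'=(b_0,b_1,b_2)$ and $\gamma'=(c_0,c_1,c_2)$. Both triples are zero-sum with nonzero entries, so each is a character of the Fermat curve $X^1_m$ and $\V(\beta'),\V(\gamma')\subset H^1(X^1_m)$ are eigenlines. For every unit $t$ we have $\sum_i\fr{tb_i}\in\{m,2m\}$, and likewise for $\gamma'$. Since $a$ has grade $3$, the two triple grades sum to $3$ for every $t$. Hence $\V(t\beta')\otimes\V(t\gamma')$ is of type $(1,0)\otimes(0,1)$ or $(0,1)\otimes(1,0)$, so it is of type $(1,1)$ in $H^2(X^1_m\times X^1_m)$ for every $t$.

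\textbf{Step 2 (the block on $X^1_m\times X^1_m$).} Let $N:=\bigoplus_t\V(t\beta')\otimes\V(t\gamma')$, summed over the distinct ordered conjugates. This is the eigenspace decomposition of a sub-Hodge structure of $H^2(X^1_m\times X^1_m)$ that is defined over $\Q$, because it is stable under $\Gal(\Q(\zt_m)/\Q)$, which permutes these eigenlines. By Step 1, $N_\Q:=N\cap H^2(X^1_m\times X^1_m,\Q)$ consists entirely of Hodge classes. Lefschetz $(1,1)$ then writes each element of $N_\Q$ as a $\Q$-linear combination of divisor classes; I make no attempt to find a single explicit divisor.

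\textbf{Step 3 (transport to $X^4_m$).} I will use the Shioda--Katsura structure map. Put $C_1=\{x_0^m+x_1^m+x_2^m=0\}$ and $C_2=\{y_0^m+y_1^m+y_2^m=0\}$, and fix $\varepsilon$ with $\varepsilon^m=-1$. For each $\mu_m$-twist, the line joining $(x:0)$ and $(0:\varepsilon y)$ lies in $X^4_m$ (after reordering coordinates as dictated by $a\sim\beta'\uplus\gamma'$). The closure $\Gamma\subset C_1\times C_2\times X^4_m$ of the incidence of these lines, summed over the twists, is an algebraic correspondence. I will check, by the equivariance of $\Gamma$ under $\mu_m^3\times\mu_m^3\to\mu_m^6/\mu_m$, that $\Gamma_*$ maps $\V(\beta')\otimes\V(\gamma')$ into $\V(\beta'*\gamma')$, up to the Tate twist $(-1)$ and a coordinate permutation. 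Coordinate permutations are realized by automorphisms of $X^4_m$, so this suffices.

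I will then show that this component is nonzero, in fact an isomorphism $\V(\beta')\otimes\V(\gamma')(-1)\xrightarrow{\sim}\V(a)$, using the Shioda--Katsura isomorphism $H^4_{\mathrm{prim}}(X^4_m)\cong\bigl(H^1\otimes H^1\bigr)^{\mu_m}(-1)\oplus(\text{lower-dimensional terms})$. Its eigenlines with all six entries nonzero lie in the first summand. Since $\Gamma$ is defined over $\Q$, $\Gamma_*$ commutes with the Galois action, so it maps $N_\Q(-1)$ isomorphically onto $M(a)_\Q$. Every rational Hodge class in $M(a)$ is therefore $\Gamma_*$ of a divisor combination, and hence algebraic.

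\textbf{Main obstacle.} I expect the delicate step to be the bookkeeping in Step 3. One must check that the twisted join correspondence really hits the ordered eigenline $\V(a)$ and not only a multiset-equivalent one. One must also check that the $\mu_m$-quotient and the blow-up in the Shioda--Katsura construction do not kill the $\V(\beta')\otimes\V(\gamma')$ component. My first attempt would be a direct character computation of $\Gamma_*$ on eigenvectors, using the explicit action of $\mu_m^6$. I would fall back on the isomorphism statement of \cite{SK}, which already identifies the eigenline images.
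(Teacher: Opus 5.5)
Your architecture is the paper's (Lemma~\ref{lem:transport}):
\begin{itemize}
\item grade bookkeeping makes every $\V(t\beta')\otimes\V(t\gamma')$ of type $(1,1)$;
\item the orbit sum is a $\Q$-sub-Hodge structure of $H^2(X^1_m\times X^1_m)$, which Lefschetz $(1,1)$ makes divisorial;
\item the joining-line correspondence carries it onto $M(a)_\Q$.
\end{itemize}
Steps~1 and~2 are fine.

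The gap is in the one load-bearing claim of Step~3: that $\Gamma_*$ is \emph{nonzero} on $\V(\beta')\otimes\V(\gamma')$. Equivariance only places the image inside $\V(a)$. The curve eigenlines carry no algebraic classes, so nonvanishing has to come from the structure theorem, and the version you quote is wrong. For $(r,s)=(2,2)$, \cite[Thm.~2.2]{daSilva} reads
\[
H^4_{\mathrm{prim}}(X^4_m)\cong\bigl[H^2_{\mathrm{prim}}(X^2_m)\otimes H^2_{\mathrm{prim}}(X^2_m)\bigr]^{\mu_m}\oplus H^1(X^1_m)\otimes H^1(X^1_m)(-1).
\]
So $H^1\otimes H^1$ is the \emph{second} summand: it is twisted, and no $\mu_m$-invariants are taken. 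Your separating criterion (``all six entries nonzero'') also separates nothing, because every eigenline of $H^4_{\mathrm{prim}}$ has that property. The correct criterion is $a_0+a_1+a_2\equiv0$ once $\beta'$ is put first. That is what places $\V(a)$ in the second summand.

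The second summand is realized by exactly your $\Gamma$. The exceptional divisor of the blow-up of $X^2_m\times X^2_m$ along $X^1_m\times X^1_m$ is the $\PP^1$-bundle of joining lines, and it maps onto the join inside $X^4_m$. The $\varepsilon$, the $\mu_m$-twists and the $\mu_m$-quotient all belong to the \emph{first} summand's rational map. For the join they are vacuous: $(0:\varepsilon y)=(0:y)$ in $\PP^5$, and $(1,1,1,\zt,\zt,\zt)$ preserves every joining line. So your worry that the quotient or the blow-up might kill the component does not arise.

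What you actually need is two things:
\begin{itemize}
\item identify $\Gamma_*$, up to the factor $m$ of \cite[Thm.~2.2(c)]{daSilva}, with the structure map $f$ on the second summand;
\item quote that $f$ is a $G$-equivariant isomorphism.
\end{itemize}
That is your stated fallback, and it is precisely the paper's step~(iii); with it your argument closes.

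A minor point: $\Gamma_*$ respects the coefficient Galois action because any $\Q$-cycle acts $\Q$-linearly on Betti cohomology. Its field of definition plays no role.
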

In particular the classes at $m=39,117,195$ (two orbits each), none quasi-decomposable or
standard, are closed; the orbit of $(1,7,16,22,34,37)$ is a certified gap family ($\nu\notin S_m$ at $39$ and at its
inflations $117,195$); so is the $m=33$ witness of Theorem~\ref{thm:App}, at $33$ and at its
inflations $99,165$ --- every one of these verdicts carries a modular kernel witness in the
ancillary verifier, together with the doubling certificate $2\nu\in S_m$, not covered by
the published mechanisms compared in \S\ref{sec:dossier} (to the author's knowledge), while the
second orbit's $\nu$ lies in $S_{39}$ (an explicit six-generator $\pm1$ certificate), so its algebraicity is
also derivable from the standard calculus (Lemma~4.1(iii) of \cite{Aoki00}) --- the closure certificates and the join-transport construction scheme are new for both.

\begin{theorem}[two-pair standard closure]\label{thm:Aprime}
Let $a$ be a Hodge $(2,2)$ character of $X^4_m$ admitting vanishing pairs $p_1=\{k,m-k\}$,
$p_2=\{\ell,m-\ell\}$ with
\[
a\uplus p_1\uplus p_2=S\uplus Q\qquad(\text{equality of }10\text{-element multisets}),
\]
where $S$ is an Aoki $5$-standard sextuple and $Q$ a grade-$2$ Hodge quadruple. Then the rational Hodge
classes of $M(a)$ are algebraic.
\end{theorem}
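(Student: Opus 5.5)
We reduce to Aoki's inductive structure on juxtapositions, working on the $10$-entry level, i.e.\ on the Fermat variety $X^8_m$, and then descend back to $X^4_m$. The key point is that the same $10$-element multiset admits two juxtaposition readings:
\[
a*p_1*p_2\;\sim\;S*Q .
\]
The left side is a Hodge $(4,4)$ character of $X^8_m$ of grade $5$ ($3+1+1$), and so is the right side ($3+2$). Both sides carry the same eigenline up to a coordinate permutation, which is realized by an automorphism of $X^8_m$. Since claim is invariant under such automorphisms and under the Galois action, it suffices to show that claim holds for $S*Q$ at level $X^8_m$, and then that claim passes from $a*p_1*p_2$ down to $a$.

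\textbf{Step 1: $\mathrm{claim}(S*Q)$.} The sextuple $S$ is an Aoki standard character, so $\mathrm{claim}(S)$ holds on $X^4_m$ by \cite[Thm.~0.1/\S2]{Aoki87}; this is Proposition~\ref{prop:D}. The quadruple $Q$ is a grade-$2$ Hodge character of the surface $X^2_m$, so $\mathrm{claim}(Q)$ holds by the Lefschetz $(1,1)$ theorem. The Shioda--Katsura inductive structure gives a rational map $X^4_m\times X^2_m\dashrightarrow X^8_m$. It is resolved by blowing up, and under this correspondence the eigenline $\V(S)\otimes\V(Q)$ is carried isomorphically onto $\V(S*Q)$, up to the standard normalizing nonzero constant \cite{SK,Shioda79}. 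The external product of algebraic cycles is algebraic, and the image under an algebraic correspondence is algebraic. Hence $\omega_{S*Q}(Z_S\times Z_Q)\neq 0$, and therefore $\mathrm{claim}(S*Q)$ holds.

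\textbf{Step 2: descent through the vanishing pairs.} This is the step I expect to be the main obstacle, because algebraicity does not a priori pass from a juxtaposition to a factor. The route I would try first is Aoki's cancellation lemma for vanishing pairs \cite[\S1, Thm.~1-1 and the induction on $\Ds$]{Aoki87}: for $\delta\in\Ds^0_m$ a vanishing pair, $\mathrm{claim}(\alpha*\delta)\Leftrightarrow\mathrm{claim}(\alpha)$. The mechanism runs through the same joining correspondence, now taken in the reverse direction. The transpose correspondence $X^{r+2}_m\to X^r_m\times X^0_m$, composed with intersection against the cohomology class of the zero-dimensional Fermat variety $X^0_m$ (which is algebraic), maps $\V(\alpha*\delta)$ isomorphically onto $\V(\alpha)$. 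Equivalently, one applies a Lefschetz-type operator built from the linear cycles of $\Ds$, which is injective on the relevant eigenlines. Since $X^0_m$ is a finite set of points, every class on it is algebraic, so no Hodge-theoretic input is lost. Applying this twice, we peel off $p_2$ and then $p_1$. The nonvanishing of the transported component, $\omega_a(Z)\neq0$, follows because the composite correspondence acts on the eigenline by a nonzero Jacobi-sum-type scalar, which I would check explicitly via the averaged projector.

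\textbf{Step 3: passing to the whole block.} Once $\mathrm{claim}(a)$ holds, it holds for every Galois conjugate $ta$ by Galois-invariance of claim. Therefore $M(a)=\bigoplus_t\V(ta)$ is spanned by algebraic classes. As $M(a)_\Q\otimes\C=M(a)$ and the cycle classes are rational, the rational Hodge classes of $M(a)$ lie in the $\Q$-span of algebraic cycle classes. This proves the statement.
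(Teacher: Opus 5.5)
Your argument is the paper's proof. You read the multiset identity as $a*\delta\sim S*Q$ on $X^8_m$ and get $\mathrm{claim}(S*Q)$ from $\mathrm{claim}(S)$ and Lefschetz for $Q$; this is Aoki's Theorem~1-4(i), realized by the joining-lines $\PP^1$-bundle over $X^4_m\times X^2_m$ rather than by a rational map to $X^8_m$. You then descend by Aoki's Theorem~1-4(ii): the paper applies it once with $\delta=p_1*p_2\in\Ds^2_m$, and your two applications with $s=0$ are equivalent.

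Two details need tightening. First, $\mathrm{claim}(S)$ is Aoki's Theorem~2-1, which assumes $\gcd(x,d)=1$; Proposition~\ref{prop:D} gives no algebraicity, so a block like $\sigma_{5,3}$ at $m=105$ needs the inflation Lemma~\ref{lem:infl}. Second, your Step~2 sketch must pair against a class with nonzero $\V(-\delta)$-component, not the fundamental class of $X^0_m$, which annihilates the primitive line $\V(\delta)$; the simplest fix is to cite Theorem~1-4(ii) outright.
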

This applies to---and, by exhaustive search, uniquely presents---the three remaining $5\mid m$ classes of
the census:
\begin{center}\small
\begin{tabular}{llll}
\toprule
$m$ & $a$ & $p_1,p_2$ & $S\uplus Q$\\
\midrule
$45$ & $(1,19,20,28,30,37)$ & $(5,40),(10,35)$ & $(1,10,19,28,37,40)\uplus(5,20,30,35)$\\
$105$ & $(3,24,50,66,85,87)$ & $(15,90),(45,60)$ & $(3,24,45,66,87,90)\uplus(15,50,60,85)$\\
$105$ & $(1,22,43,64,90,95)$ & $(5,100),(20,85)$ & $(1,22,43,64,85,100)\uplus(5,20,90,95)$\\
\bottomrule
\end{tabular}
\end{center}
---none quasi-decomposable, standard, or Theorem~\ref{thm:A}-splittable: closing, with the
induced $m=135$ copy, the $p=5$-standard-block classes of the census. Of these, the
$\Q(\zt_{21})$ orbit at $105$ is a certified gap class, not covered by the published mechanisms
compared in \S\ref{sec:dossier} (to the author's knowledge); $m=45$, $135$ and the $\Q(\zt_7)$ orbit at $105$
have $\nu\in S_m$ (explicit $\pm1$ certificates, re-summed by the ancillary verifier), so for
those three the algebraicity is also derivable from \cite[Lem.~4.1(iii)]{Aoki00} --- and, for
$45$ and $135$, from the degree forms of \cite[Thm.~0.1(i)]{Aoki00} via the reduction opening
its \S4 proof (quoted in Appendix~\ref{app:aoki}) --- the
explicit two-pair presentations are the new content.

\begin{theorem}[level-lifted closure; the $m=33$ witness]\label{thm:App}
Let $w=(1,4,16,22,25,31)$, the unique non-quasi-decomposable non-standard Hodge class of $X^4_{33}$
(uniqueness is the census's, used for Theorems~\ref{thm:B} and~\ref{thm:C}; the proof below uses
only the displayed identities),
and $a=2w\bmod 66$ its induced copy on $X^4_{66}$. Then
\[
a\uplus\{1,65\}\uplus\{25,41\}=Q\uplus S,\qquad Q=(1,25,44,62),\ S=(2,8,32,41,50,65),
\]
with $Q$ a grade-$2$ Hodge quadruple and $S$ quasi-decomposable at level $66$ via
$S\uplus\{33,33\}=(2,32,33,65)\uplus(8,33,41,50)$, the self-paired element $(33,33)\in M_{66}(1)$
being its unique quasi-witness. Hence $\V(a)$ is algebraic (Lefschetz $+$ da Silva $+$ Aoki~1-4, as in
Theorem~\ref{thm:Aprime}), and $\V(w)$ is algebraic by descent along $\pi\colon X^4_{66}\to X^4_{33}$,
$(x_i)\mapsto(x_i^2)$ (Lemma~\ref{lem:descent}).
\end{theorem}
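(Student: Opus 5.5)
The plan has three stages: verify the displayed identities at level $66$, turn the two-pair presentation into algebraicity of $\V(a)$ by the same mechanism as Theorem~\ref{thm:Aprime} (with the standard sextuple replaced by a quasi-decomposable one), and descend along $\pi$.

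\emph{Step 1: the identities.} Here $a=2w=(2,8,32,44,50,62)$, and I will check the arithmetic directly.
\begin{itemize}
\item Multiset equality: $a\uplus\{1,65\}\uplus\{25,41\}$ gives $\{1,2,8,25,32,41,44,50,62,65\}$, which is exactly $Q\uplus S$.
\item Zero sums: $Q$ sums to $132\equiv0\pmod{66}$, and $S$ sums to $198\equiv0\pmod{66}$.
\item Grade of $Q$: I check $\sum_i\fr{tq_i}=132$ for the $20$ units $t\in(\Z/66)^\times$. This is a finite computation, reducible modulo $\pm1$ to $10$ values of $t$.
\item Quasi-decomposability of $S$: both $(2,32,33,65)$ and $(8,33,41,50)$ sum to $132$. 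I verify that each is grade $2$ for all units. The element $(33,33)$ is in $M_{66}(1)$ because $\fr{33t}=33$ for odd $t$.
\end{itemize}
The "unique quasi-witness" clause is an enumeration over the finitely many elements of $M_{66}(1)$, and I would treat it as a remark rather than a needed step.

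\emph{Step 2: algebraicity of $\V(a)$.}
\begin{itemize}
\item By Lefschetz $(1,1)$ on the surface $X^2_{66}$, $\mathrm{claim}(Q)$ holds.
\item By da Silva's quasi-decomposability theorem (the Shioda--Katsura inductive structure), $\mathrm{claim}(S)$ holds on $X^4_{66}$.
\item The vanishing pairs $\{1,65\}$ and $\{25,41\}$ combine to an element of $\Ds^2_{66}$, represented by linear cycles.
\item Aoki's Theorem~1-4 (the juxtaposition principle used in Theorem~\ref{thm:Aprime}) lets me cancel vanishing pairs. If $\alpha*\delta\sim\beta*\gamma$ with claim holding for $\beta,\gamma,\delta$, then claim holds for $\alpha$. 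Here the $8$-dimensional character $Q*S$ (up to permutation) equals $a*(1,65,25,41)$.
\end{itemize}
So I apply the argument of Theorem~\ref{thm:Aprime} verbatim: the only change is that $\mathrm{claim}(S)$ comes from quasi-decomposability instead of standardness. This gives claim on $\V(a)$. Galois invariance of claim then extends it to all of $M(a)$, so the rational Hodge classes there are algebraic.

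\emph{Step 3: descent to $X^4_{33}$.} The map $\pi$ is finite, surjective and $G$-equivariant for the squaring homomorphism $\mu_{66}\to\mu_{33}$. Under $\pi^*$, the eigenline $\V(w)$ maps isomorphically onto $\V(2w)=\V(a)$, since inflation preserves characters. Since $\pi_*\pi^*=\deg\pi=2^5$ on cohomology, $\pi_*$ maps a cycle $Z$ with $\omega_a(Z)\neq0$ to a cycle with nonzero $\V(w)$-component, up to this nonzero scalar. Both $\pi^*$ and $\pi_*$ are induced by algebraic correspondences (the graph of $\pi$), so algebraicity transfers; this is Lemma~\ref{lem:descent}.

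The main obstacle I expect is Step 2. Its logic depends on the exact form of Aoki's juxtaposition theorem: I must ensure that cancellation of vanishing pairs works when the two sides are only permutation-equivalent, and that the reordering is realized by automorphisms. I would handle this by choosing orderings so that the two pairs sit at the ends, matching Aoki's $\alpha*\delta$ form.
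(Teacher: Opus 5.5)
Your proposal is correct and follows the paper's proof step for step: you verify the exact identities at level $66$, get $\mathrm{claim}(Q)$ from Lefschetz $(1,1)$ and $\mathrm{claim}(S)$ from the quasi-decomposition, apply Aoki's Theorem~1-4(i) to $S*Q$ and 1-4(ii) with $\delta=(1,65,25,41)\in\Ds^2_{66}$, and finish with descent by Lemma~\ref{lem:descent}. The one step the paper makes more explicit is the da Silva step: your ``quasi-decomposability theorem'' is his Corollary~2.3(b) applied to $S=(2,32,65;33)\#(8,41,50;33)\in U^{2,2}_{66}$, and the paper checks that its hypotheses allow the self-paired distinguished value $33=m/2$ (Remark~\ref{rem:selfpair}), which you should state since this is exactly where the even level enters.
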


\begin{theorem}[completion of the odd-degree census; $m\le199$]\label{thm:B}
For every odd $m\le199$, every grade-$3$ Hodge $(2,2)$ class of $X^4_m$ is algebraic: the classes
beyond $\text{decomposable}\cup\text{quasi}\cup\text{standard}$ are exhausted by
Theorems~\ref{thm:A}, \ref{thm:Aprime}, \ref{thm:App} and level transport (the inflation/descent
pair). In particular Theorem~\ref{thm:main} holds --- the Hodge conjecture is true \emph{in full}
for every Fermat fourfold of odd degree $\le199$: in codimension $2$ the non-primitive summand of $H^4$ is spanned by the
algebraic class $h^2$, so the statement follows from the primitive one; codimension $1$ is
Lefschetz $(1,1)$; and codimension $3$ follows from codimension $1$ by hard Lefschetz ---
$L^2\colon H^2(\Q)\xrightarrow{\ \sim\ }H^6(\Q)$ is an isomorphism of Hodge structures, so
every $(3,3)$ Hodge class is $L^2$ of a $(1,1)$ Hodge class, and $L^2$ of a divisor class is
algebraic. (Codimensions $0$ and $4$ are trivial: $H^0$ and $H^8$ are spanned by the classes of
$X$ and of a point.)
\end{theorem}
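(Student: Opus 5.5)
The proof has two layers: a reduction of the full Hodge conjecture on $X^4_m$ to the primitive $(2,2)$ statement, and then a finite, machine-certified case analysis of the primitive Hodge characters level by level.

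\emph{Reduction.} The non-primitive part of $H^4(X^4_m,\Q)$ is $\Q h^2$, which is algebraic. Codimension $1$ is Lefschetz $(1,1)$. Codimension $3$ follows by hard Lefschetz: $L^2\colon H^2(\Q)\to H^6(\Q)$ is an isomorphism of Hodge structures, so every $(3,3)$ Hodge class is $L^2$ of a divisor class. Codimensions $0$ and $4$ are trivial.

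\emph{Primitive part.} The rational Hodge classes in $H^4_{\mathrm{prim}}$ span the sum of the eigenlines $\V(a)$ over $a\in B^4_m$. This sum decomposes into the rational Galois blocks $M(a)_\Q$. Claim is invariant under coordinate permutations (automorphisms) and under the Galois action, so it suffices to establish $\mathrm{claim}_m(a)$ for one sorted representative of each Galois orbit in $B^4_m$.

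\emph{Small levels.} For odd $m<21$ and for $m=23$ I would cite da Silva's Theorems~2.6--2.7 ($m\le20$, and $m$ prime).

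\emph{Census levels.} For the $89$ remaining levels $21\le m\le199$, $m$ odd, $m\ne23$, the plan is the following.
\begin{enumerate}
\item Enumerate all sorted zero-sum sextuples of nonzero residues, keep those of grade $3$, and reduce modulo $(\Z/m)^\times$. This yields the $78{,}299$ orbit representatives.
\item For each representative, search for a positive witness in a fixed order: a splitting in $M_m$ (decomposable, Shioda--Katsura); a splitting after adjoining one element of $M_m(1)$ (quasi-decomposable); Aoki standardness via Proposition~\ref{prop:D} and Aoki~1-4. Each witness is an explicit multiset identity that can be re-checked independently.
\item Content-$e$ classes ($e>1$) are inflations from level $m/e$. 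Level transport (inflation and the descent Lemma~\ref{lem:descent}) reduces them to the primitive-content class at the lower level, treated at that lower level.
\end{enumerate}

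\emph{Residual orbits.} The census should leave exactly thirteen orbits after these tests. For each I would give a certificate of closure:
\begin{itemize}
\item the six $m=39,117,195$ orbits via a split into two zero-sum triples, by Theorem~\ref{thm:A};
\item the $45$, $135$ and two $105$ orbits via the displayed two-pair presentations, by Theorem~\ref{thm:Aprime};
\item $m=33$ by Theorem~\ref{thm:App}, with $99$ and $165$ by inflation of that class (level transport).
\end{itemize}

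\emph{Main obstacle: completeness of the census.} One must show that no orbit is missed and that every orbit reported as non-decomposable or non-quasi-decomposable really is. A positive certificate is self-verifying, but the negative screenings must be exhaustive. For those, the search space is finite: the splittings of a $6$-multiset into a grade-$g$ part and its complement, and the finitely many elements of $M_m(1)$ (vanishing pairs and self-paired $(m/2,m/2)$ at even $m$).

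I would make completeness robust by cross-validation. The enumeration would be run by a second, algorithmically independent census over all levels. It would also be checked by brute-force generation of every sorted zero-sum sextuple through $m=143$, comparing orbit counts level by level. Finally, only positive witnesses are needed for the theorem itself: the negative screenings only explain why the new criteria are required. So the logical load rests on the census producing, for every orbit, at least one verified algebraicity witness, together with a proof that the enumeration of grade-$3$ orbits is complete.
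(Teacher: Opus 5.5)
Your proposal is correct and follows essentially the paper's proof. It has the same reduction to the primitive $(2,2)$ part, the same orbit census with decomposable/quasi/standard screening, the same closure of the thirteen residual orbits by Theorems~\ref{thm:A}, \ref{thm:Aprime}, \ref{thm:App} and inflation, and the same cross-validation. Your remark that only positive witnesses plus exhaustive enumeration carry the logical load is exactly right, and enumerating every sorted zero-sum sextuple directly makes the paper's meet-in-the-middle completeness argument (Lemma~\ref{lem:complete}(iii)) unnecessary, at greater computational cost.

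One slip: standard sextuples are closed by Aoki's standard cycles, \cite[Thm.~2-1]{Aoki87}, together with Lemma~\ref{lem:infl} when $\gcd(x,d)>1$. They are not closed by Theorem~1-4, which only transports claim through juxtaposition.
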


\begin{theorem}[field obstruction; Question~1]\label{thm:C}
Let $a_0=(7,10,13,19,22,28)=7w$ be da Silva's representative of the $m=33$ orbit. No algebraic cycle
defined over $\Q(\zt_{33})$ --- a cycle of codimension $2$ with $\Q$-coefficients,
$Z\in CH^2(X^4_{33,\Qbar})_\Q$ fixed by $G_{\Q(\zt_{33})}$ --- has nonzero projection onto the
Galois orbit of $\V(a_0)$: \emph{geometric} Frobenius $\mathrm{Frob}^{\mathrm{geom}}_{\mathfrak p}$
at any prime $\mathfrak p$ above $67$ acts on every conjugate eigenline by a primitive sixth
root of unity, of exact order $6$ --- an exact certificate $u(a_0,67)=1+\zt_3=\zt_6$ (for the pinned character
choice of \S\ref{sec:C}) computed
in $\Z[x]/\Phi_{33}$ with no floating point. (Under an alternative Jacobi-sum definition
$j_{\mathrm{alt}}=-j$ the formula relating $j_{\mathrm{alt}}$ to geometric Frobenius acquires
the compensating sign; the geometric eigenvalue remains $\zt_6^{\pm1}$.) Since $W$ --- da Silva's candidate cycle, defined over $\Q$ by explicit equations
\cite[Question~1]{daSilva} --- and all its $\Q(\zt_{33})$-rational translates are fixed by
$G_{\Q(\zt_{33})}$, their classes have zero projection: \emph{the answer to Question~1 is NO at $m=33$}.
\end{theorem}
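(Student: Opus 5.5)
The plan is to reduce the statement to a single Frobenius eigenvalue computation, and then use a fixed-vector argument on the Galois block. Let $K=\Q(\zt_{33})$ and let $\mathfrak p$ be a prime of $K$ above $67$. Since $67\equiv1\pmod{33}$, the prime $67$ splits completely in $K$, so $N\mathfrak p=67$. The variety $X^4_{33}$ has good reduction at $\mathfrak p$, and all of $G=\mu_{33}^6/\mu_{33}$ is defined over $K$. Therefore $\mathrm{Frob}^{\mathrm{geom}}_{\mathfrak p}$ commutes with the $G$-action on $H^4_{\mathrm{et}}(X^4_{33,\Qbar},\Q_\ell)(2)$, and it preserves each eigenline $\V(b)$. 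By Weil's Jacobi-sum count \cite{Weil49,Weil52}, it acts on $\V(b)(2)$ by the scalar $u(b,\mathfrak p)=j(b)/67^{2}$, which is the value of the normalized Hecke character $\chi_b$ at $\mathfrak p$.

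\textbf{Step 1: the exact computation.} I fix the pinned multiplicative character $\chi$ of $\F_{67}^\times$ of order $33$, which amounts to choosing a generator and $\mathfrak p$. I then compute the Jacobi sum $j(a_0)$ as an element of $\Z[x]/\Phi_{33}$ by the standard recursion $j=\prod g(\chi^{a_i})/g(\chi^{\sum a_i})$. To avoid Gauss sums altogether, I evaluate it directly as the point count $\sum\chi^{a_0}(v_0)\cdots\chi^{a_5}(v_5)$ over $v_0+\dots+v_5=0$, with the $v_i\in\F_{67}^\times$. This sum can be reduced to iterated two-variable Jacobi sums $J(\chi^{a},\chi^{b})$, each of which is a sum of $66$ terms with cyclotomic-integer values. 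The claim to verify is $j(a_0)=67^2(1+\zt_3)$. This is exact integer arithmetic, with $\zt_3=x^{11}$. I would double-check the result in two ways. First, $|u|=1$ must hold in every embedding, as forced by grade constancy. Second, $u^6=1$ with $u^2\neq1$ and $u^3\neq1$.

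\textbf{Step 2: transport to all conjugates and all $\mathfrak p$.} For $t\in(\Z/33)^\times$, the eigenvalue on $\V(ta_0)$ at $\mathfrak p$ equals $\sigma_t$ applied to the eigenvalue on $\V(a_0)$ at $\sigma_t^{-1}\mathfrak p$. Equivalently, if we change the prime above $67$, the character $\chi$ is replaced by $\chi^{s}$. In either case the eigenvalue is a Galois conjugate of $\zt_6=1+\zt_3$. Since the conjugates of $\zt_6$ are $\zt_6^{\pm1}$, the eigenvalue is always a primitive sixth root of unity. So every conjugate eigenline in $M(a_0)$, at every $\mathfrak p\mid67$, carries an eigenvalue different from $1$.

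\textbf{Step 3: the obstruction.} Let $Z\in CH^2(X^4_{33,\Qbar})_\Q$ be fixed by $G_K$. Its cycle class $\mathrm{cl}(Z)\in H^4(2)$ is $G_K$-invariant, and in particular it is fixed by $\mathrm{Frob}^{\mathrm{geom}}_{\mathfrak p}$; here the class is unramified at $\mathfrak p$ by good reduction and smooth base change. The projector onto $\V(b)$ is built from the $G$-action, so it commutes with Frobenius. Hence $\omega_b(Z)$ is a fixed vector in a line on which Frobenius acts by $\zt_6^{\pm1}\neq1$, and therefore $\omega_b(Z)=0$ for every $b$ in the orbit. Da Silva's $W$ is defined over $\Q$, and its $K$-rational translates under $G$ are again $G_K$-fixed, so the conclusion applies to all of them. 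For the sign remark, replacing $j$ by $-j$ inverts the sign convention in the Lefschetz trace formula, so the geometric eigenvalue is unchanged up to the inversion $\zt_6\mapsto\zt_6^{-1}$.

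\textbf{Main obstacle.} I expect the hardest part to be normalization bookkeeping rather than the arithmetic itself. Several conventions must be pinned consistently: geometric versus arithmetic Frobenius, the Tate twist, the identification of $\V(a)$ with the character $\chi^{a}$ versus $\chi^{-a}$, and the sign $(-1)^n$ in the trace formula. An error in any of these could turn $\zt_6$ into $\zt_6^{-1}$, which is harmless, or, if mishandled, into a spurious factor. I would calibrate the conventions on a known divisorial or quasi-decomposable class, for example the second $m=39$ class. For such a class the computed $u$ must equal $1$ at split primes where the divisor is rational, and this check fixes the conventions before the computation is applied to $a_0$.
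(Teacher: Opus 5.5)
Your proposal is correct and follows the paper's proof essentially step for step: good reduction at a split prime $\mathfrak p\mid67$; Frobenius acting on each eigenline $\V(ta_0)(2)$ by the Tate-normalized Jacobi-sum scalar; the exact evaluation $u(a_0,67)=1+\zt_3=\zt_6$ in $\Z[x]/\Phi_{33}$; and the fixed-vector argument, under which a $G_{\Q(\zt_{33})}$-fixed class has zero component on every line where Frobenius acts by $\zt_6^{\pm1}\neq1$. Your one variation is to deduce the twenty conjugate scalars from $j(ta_0)=\sigma_t(j(a_0))$, so that $u(ta_0,67)=1+\zt_3^{t}=\zt_6^{\pm1}$ according to $t\bmod3$, rather than listing them in the certificate; this is valid and slightly more economical (your Gauss-sum formula with $g(\chi^{\sum a_i})$ in the denominator does not apply to zero-sum characters, where $j(a)=p^{-1}\prod_i g(\chi^{a_i})$, but you bypass it with the direct sum anyway).
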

\begin{corollary}[local residue-degree obstruction]\label{cor:resdeg}
The obstruction is local and exact: if a cycle defined over a finite extension
$L/\Q(\zt_{33})$ --- not assumed Galois over $\Q(\zt_{33})$; the constraint is per prime ---
has nonzero projection onto the Galois orbit of $\V(a_0)$, then at every prime
$\mathfrak q\mid\mathfrak p\mid67$ of $L$ one needs
$u(a_0,67)^{f(\mathfrak q/\mathfrak p)}=1$, whence $6\mid f(\mathfrak q/\mathfrak p)$:
every residue degree of $L$ above $67$ is divisible by $6$. (No hypothesis on the ramification
of $L/\Q(\zt_{33})$ enters: the eigenline's character is unramified at $67$ over the base, so
inertia at $\mathfrak q$ acts trivially and Frobenius acts through
$u^{f(\mathfrak q/\mathfrak p)}$.)
\end{corollary}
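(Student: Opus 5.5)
We deduce the corollary from Theorem~\ref{thm:C}, using the Galois-equivariance of the cycle class map together with the fact that the $\V(a_0)$-component is an eigenline for Frobenius. Let $Z\in CH^2(X^4_{33,\Qbar})_\Q$ be fixed by $G_L$, and let $\mathfrak q\mid\mathfrak p\mid 67$ with $f=f(\mathfrak q/\mathfrak p)$.

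First, the cycle class $\mathrm{cl}(Z)\in H^4_{\text{ét}}(X^4_{33,\Qbar},\Q_\ell(2))$ is fixed by $G_L$. Since $\ell$-adic cohomology and the projector $\omega$ are compatible with Betti cohomology under comparison, the same holds for its projection onto the rational block $M(a_0)$. Here $\ell\ne67$, and $\ell$ is chosen split in $\Q(\zt_{33})$ so that the eigenlines are defined over $\Q_\ell$.

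The projection onto each eigenline $\V(b)$, for $b=ta_0$, is $G_{\Q(\zt_{33})}$-equivariant. This is because $G=\mu_{33}^6/\mu_{33}$ acts by automorphisms defined over $\Q(\zt_{33})$, which commute with that Galois group. So the component $\omega_b(Z)$ is fixed by $G_L$, in particular by a decomposition group $D_{\mathfrak q}\subset D_{\mathfrak p}$.

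Next we use the structure of $D_{\mathfrak p}$ on the line. The Fermat variety has good reduction at $67\nmid 33$, so the $G_{\Q(\zt_{33})}$-character on $\V(b)(2)$ is unramified at $\mathfrak p$, and inertia $I_{\mathfrak q}\subset I_{\mathfrak p}$ acts trivially. Hence $D_{\mathfrak q}$ acts on $\V(b)(2)$ through its image in $D_{\mathfrak p}/I_{\mathfrak p}\cong\hat\Z$. That image is generated by $\mathrm{Frob}_{\mathfrak p}^{f}$, since the residue field of $\mathfrak q$ has degree $f$ over that of $\mathfrak p$.

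Because $67\equiv1\pmod{33}$, the prime $\mathfrak p$ is split of norm $67$. Theorem~\ref{thm:C} then says that geometric Frobenius acts on $\V(b)(2)$ by $u(b,67)$, which is a Galois conjugate $\sigma_t(\zt_6)$ of $u(a_0,67)=\zt_6$ and again has exact order $6$. Conjugating by $t$ permutes the eigenlines and applies $\sigma_t$ to the Jacobi sum.

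It remains to combine these facts. If $\omega_b(Z)\ne0$ for some $b$ in the orbit, then $u(b,67)^f=1$; the inverse appears for arithmetic Frobenius, which does not affect this conclusion. Since $u(b,67)$ has exact order $6$, this forces $6\mid f$. The conditions for different $b$ in the orbit are equivalent, because all the values $u(b,67)$ are conjugate roots of unity of the same order.

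No Galois hypothesis on $L$ is needed, since the argument is carried out at one prime $\mathfrak q$ at a time. The main point to check is the unramifiedness claim together with the identification of the action of $D_{\mathfrak q}$ with $\mathrm{Frob}_{\mathfrak p}^{f}$; both follow from smooth proper base change at the good prime $67$.
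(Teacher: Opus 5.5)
Your proposal is correct and follows essentially the paper's own argument. It uses the $G_L$-invariance of the $\ell$-adic class, the $G_{\Q(\zt_{33})}$-equivariance of the eigenline projectors, and unramifiedness at $67$, so that $D_{\mathfrak q}$ acts through $\mathrm{Frob}_{\mathfrak p}^{f}$; the exact order six of the Frobenius scalar from Theorem~\ref{thm:C} then forces $6\mid f$. The one difference is minor: your observation $u(ta_0,67)=\sigma_t\bigl(u(a_0,67)\bigr)$ gives exact order six on every conjugate line directly, whereas the paper reads it off the twenty computed values.
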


\begin{remark}
The same machinery, run at the closed levels, computes values of order $9$ at $m=45$ and of
orders $42$ and $7$ at $m=105$ at sampled split primes --- computational observations, not part
of any theorem here; they indicate that the cycles of Theorem~\ref{thm:Aprime} likewise cannot
be $\Q(\zt_m)$-rational.
\end{remark}

\begin{proposition}[standard $=$ AP]\label{prop:D}
Let $m$ be odd. Aoki's standard characters $\sigma_{p,x}$ for an odd prime $p\mid m$ --- defined for
$x$ with $d/\gcd(x,d)>2$, $d=m/p$ \cite[\S1]{Aoki87} --- are exactly the padded arithmetic-progression
characters $\{x,x+m/p,\dots,x+(p-1)m/p,-px\}$ with $px\not\equiv0\pmod m$, of grade $(p+1)/2$;
grade-$3$ standard classes exist iff $5\mid m$ and $m>5$.
\end{proposition}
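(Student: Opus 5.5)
The plan is to unwind Aoki's definition of $\sigma_{p,x}$ from \cite[\S1]{Aoki87}, match it entrywise with the padded arithmetic progression, and then verify the grade by a direct residue count.

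\emph{Step 1 (identification).} Write $d=m/p$. Aoki's standard element $\sigma_{p,x}$ is the $(p+1)$-entry character consisting of the full coset $x+d\Z/m$, that is $x,x+d,\dots,x+(p-1)d$, padded by the single entry $-px$. I will check that his normalization agrees with this up to coordinate permutation and a possible global sign $x\mapsto -x$. A sign change is harmless, because $-1$ is a unit and the family is stable under $x\mapsto -x$ (the coset of $-x$ padded by $px$).

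Next I compare the admissibility conditions. Aoki requires $d/\gcd(x,d)>2$. Since $m$ is odd, $d$ is odd, so $d/\gcd(x,d)$ is odd and can never equal $2$. The condition therefore reduces to $d/\gcd(x,d)\neq1$, i.e.\ $d\nmid x$. This is equivalent to $px\not\equiv0\pmod m$. It is also equivalent to every entry being nonzero: the coset $x+d\Z/m$ contains $0$ iff $d\mid x$, and $-px\equiv0$ iff $d\mid x$. The zero-sum condition holds because $\sum_j(x+jd)-px=d\,p(p-1)/2=m(p-1)/2\equiv0$, using that $p$ is odd.

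\emph{Step 2 (grade).} Fix a unit $t\in(\Z/m)^\times$. Because $t$ is a unit mod $p$, multiplication by $t$ permutes the multiples of $d$ mod $m$. Hence $t\cdot\{x+jd\}=\{tx+jd\}_{j}$ is again the full coset through $tx$.

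Let $r\in\{1,\dots,d-1\}$ be the residue of $tx$ mod $d$; it is nonzero by Step 1. The coset then consists exactly of the integers $r+jd$ for $0\le j\le p-1$, all lying in $[1,m-1]$. Their residues therefore sum to $pr+d\,p(p-1)/2=pr+m(p-1)/2$.

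For the padding entry, note $ptx\equiv pr\pmod m$ because $pd=m$. Since $0<pr<m$, we get $\fr{-ptx}=m-pr$. Adding the two contributions gives
\[
\sum\fr{t a_i}=m(p-1)/2+m=m(p+1)/2
\]
for every $t$, so the grade is $(p+1)/2$.

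\emph{Step 3 (grade $3$).} A grade-$3$ standard class requires $(p+1)/2=3$, i.e.\ $p=5\mid m$. It also requires some $x$ with $d\nmid x$, which exists iff $d>1$, i.e.\ $m>5$. Conversely, if $5\mid m$ and $m>5$, then $x=1$ gives the class $\sigma_{5,1}$.

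The only step I expect to need care is Step 1: pinning down Aoki's exact presentation of $\sigma_{p,x}$ (the ordering, and whether the padding entry is written $-px$ or $px$). The arithmetic in Steps 2 and 3 is routine once that identification is fixed.
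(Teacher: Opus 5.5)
Your proposal is correct and follows the paper's route. You read off Aoki's $\sigma_{p,x}$, use that $d$ is odd to reduce $d/\gcd(x,d)>2$ to $d\nmid x$ (equivalently $px\not\equiv0\pmod m$), and note that grade $3$ forces $p=5$, while at $m=5$ only degenerate $x$ remain. The one difference is that you verify the grade $(p+1)/2$ by an explicit residue count and exhibit $\sigma_{5,1}$ for the converse, whereas the paper simply cites Aoki and da Silva for the grade.
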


\medskip
\noindent\emph{Computational verification and reproducibility (summary).} Theorem~\ref{thm:A}: proved (transport pinned to the Shioda--Katsura
primary text). Theorem~\ref{thm:Aprime}: proved (a citation chain through Aoki's
Theorems~1-1, 1-4(i),(ii), 2-1 \cite{Aoki87} $+$ Lefschetz $(1,1)$ $+$ an inflation lemma; all
identities verified exactly, including $u(a,p)=u(S,p)u(Q,p)$).
Theorem~\ref{thm:B}: machine-verified census, reproduced exactly by an algorithmically
independent implementation (the four-tier certificate structure is laid out in \S\ref{sec:B}).
Theorem~\ref{thm:C}: proved, exact certificate. Proposition~\ref{prop:D}:
read off the primary construction. We give no explicit defining equations for a cycle on $X^4_{33}$
(Theorem~\ref{thm:App}'s certifying cycles are constructed at level $66$ and transported by the finite
morphism).

\section{Standard cycles are the AP family (Proposition~\ref{prop:D})}\label{sec:D}
For odd $p\mid m$, $d=m/p$, and $x$ with $d/\gcd(x,d)>2$, the standard character is the multiset
$\{x,x+d,\dots,x+(p-1)d,\,m-px\}$ of grade $(p+1)/2$ (Aoki \cite[\S1]{Aoki87}; da Silva \cite{daSilva},
Appendix). For \emph{odd} $m$ the admissibility hypothesis is equivalent to the visible one:
$d$ is odd, so $d/\gcd(x,d)\le2$ forces $d\mid x$, i.e.\ $px\equiv0\pmod m$ --- exactly the degenerate
case excluded in the statement. Grade $3$ forces $p=5$; at $m=5$ every candidate degenerates
($5x\equiv0$), whence ``$m>5$''. The identification is load-bearing for
Theorem~\ref{thm:B}: it is what makes the census's standard classifier complete for odd $m$
(Lemma~\ref{lem:complete}).
The padded arithmetic-progression family is exactly the witness family of the field-of-definition
threshold of the companion papers: at the surface level in \cite{P1} (with explicit Hasse--Davenport
witnesses), and in general even dimension in \emph{Galois-invariant ranks of middle algebraic cycles
on Fermat varieties} (in preparation). \qed

\begin{remark}[even $m$]\label{rem:evenstd}
For even $m$ (used only in the even census of \S\ref{sec:even}) the admissibility hypothesis is
strictly stronger than $px\not\equiv0$: it excludes $x\equiv d/2\pmod d$. Every excluded multiset
contains a vanishing pair --- $(x+k_1d)+(x+k_2d)\equiv0\pmod m$ reduces to $1+2j+k_1+k_2\equiv0\pmod5$
for $x=d/2+jd$, solvable with $0\le k_1<k_2\le4$ since $k_1+k_2$ ranges over all residues mod~$5$ ---
so it is decomposable, hence algebraic for the trivial reason, and the census classifier (which tests
decomposability first) is unaffected. The lattice is likewise unaffected: the implemented
generating family follows the parametrization of \cite[\S2]{Aoki00} (nonzero entries only),
and at every level where a verdict is quoted here the two conventions span the \emph{same}
lattice --- the excluded $\sigma_{p,d/2}$ lies in the span of the admissible generators
(checked at $m=28,70,110,114,168,210,220$; equal ranks, identical verdicts). (Only $p=5$ yields grade-$3$ sextuples, so this is the only
case the classifier meets.)
\end{remark}

\section{The \texorpdfstring{$*$}{*}-split closure theorem (Theorem~\ref{thm:A})}\label{sec:A}

The proof is the composite
\[
B(a)_\Q\;\subset\;\mathrm{cl}\,CH^1(C\times C)_\Q
\;\xrightarrow{\ f\ }\;M(a)_\Q ,
\]
and it requires exactly five checkpoints, which the transport lemma below supplies in order:
(1)~$B(a)$ is defined over $\Q$, not merely a sum of complex eigenlines (part~(ii));
(2)~$B(a)_\Q$ consists of divisor classes, by Lefschetz $(1,1)$ (part~(ii));
(3)~$f$ is induced by an algebraic correspondence, is $\Q$-linear on rational Betti cohomology
and is $G$-equivariant, so it matches the eigenline indexing on both sides (the paragraph before
(i) and part~(iii)); (4)~$f$ is nonzero --- indeed an isomorphism --- on each one-dimensional
eigenline, and the summands of $B(a)$ and $M(a)$ correspond bijectively, so $f$ restricts to an
isomorphism $B(a)\xrightarrow{\sim}M(a)$ of complex vector spaces; since $f$ is $\Q$-linear on
rational Betti cohomology and both blocks are defined over $\Q$ (part~(ii) for $B(a)$; $M(a)$ is
the Galois orbit sum of $\V(a)$), it carries $B(a)_\Q$ isomorphically \emph{onto} $M(a)_\Q$
(parts~(i)--(iii)); (5)~the image of a $\Q$-combination of divisor
classes under an algebraic correspondence is the class of an algebraic cycle (part~(iv)).
The lemma also records which objects are correspondences, over which fields they are defined,
where the Tate twist sits, and why the scalar onto $\V(a)$ is nonzero.

\begin{lemma}[rational transport]\label{lem:transport}
Let $C=X^1_m$ ($m$ arbitrary), $a=\beta'\!*\gamma'$ a Hodge $(2,2)$ character split into two zero-sum
triples, and let $f$ denote the second summand of the Shioda--Katsura structure map
\cite[Thm.~2.2]{daSilva} (going back to Shioda's inductive structure \cite{Shioda79}; the
geometric join construction is \cite[Thm.~1.7]{SK}), a $G$-equivariant morphism of Hodge structures
\[
f\colon H^1_{\mathrm{prim}}(C,\Q)\otimes H^1_{\mathrm{prim}}(C,\Q)\;\longrightarrow\;
H^4_{\mathrm{prim}}(X^4_m,\Q)
\]
of type $(1,1)$ --- equivalently, a weight-preserving $(0,0)$-morphism out of
$H^1\otimes H^1(-1)$; this $(-1)$ is the only Tate twist in the construction. Although the
joining-line correspondence is geometrically defined only after adjoining the roots of unity
of the join construction, its cycle class with rational coefficients induces a $\Q$-linear
map on Betti cohomology with $\Q$-coefficients --- $f$ is $\Q$-linear in exactly this sense.
Then:
\begin{enumerate}
\item[(i)] at every $t\in(\Z/m)^\times$ the line $\V(t\beta')\otimes\V(t\gamma')$ has Hodge type
$(1,0)\otimes(0,1)$ or $(0,1)\otimes(1,0)$, so the block
$B(a):=\bigoplus\V(t\beta')\otimes\V(t\gamma')\subset H^2(C\times C,\C)$ --- the sum over the
distinct \emph{ordered} paired characters $(t\beta',t\gamma')$, pairs of ordered triples, whose
coordinatewise stabilizer may be nontrivial (e.g.\ $\beta'=(3,3,3)$ at $m=9$) and coincides with
the ordered stabilizer of $a=\beta'*\gamma'$, so the summands of $B(a)$ and $M(a)$ biject ---
is purely of type
$(1,1)$;
\item[(ii)] $B(a)$ is Galois-stable, so $B(a)_\Q:=B(a)\cap H^2(C\times C,\Q)$ satisfies
$B(a)_\Q\otimes\C=B(a)$, and by Lefschetz $(1,1)$ every element of $B(a)_\Q$ is a $\Q$-combination
of divisor classes on the surface $C\times C$;
\item[(iii)] $f$ is induced by an algebraic correspondence: writing $\Gamma_f\in
CH^3\bigl((C\times C)\times X^4_m\bigr)_\Q$ for the cycle class of the joining-line
construction --- the blow-up--quotient--blow-down chain of \cite{SK}, defined over
$\Q(\zt_{2m})$, of codimension $3$ in the $6$-fold $(C\times C)\times X^4_m$, and taken with
$\Q$-coefficients so that the factor $m$ of \cite[Thm.~2.2(c)]{daSilva} may be normalized away
--- we have $f=(\Gamma_f)_*$ on rational Betti cohomology. It and it is a
$G$-equivariant \emph{isomorphism} onto its structural summand \cite[Thm.~2.2]{daSilva}: it
maps each one-dimensional eigenline $\V(t\beta')\otimes\V(t\gamma')$ isomorphically --- in
particular by a nonzero scalar --- onto $\V(ta)$, hence carries $B(a)$ onto the corresponding
block of $M(a)$. On decomposable \emph{algebraic} inputs $f(Z_1\otimes Z_2)=m\,Z_1\wedge Z_2$,
the ruled join by lines \cite[Thm.~2.2(c)]{daSilva}; this formula is used only for the geometric
description of the image cycles, not for the scalar on the $H^1$-eigenlines, which are not
spanned by algebraic classes;
\item[(iv)] consequently every rational Hodge class of $M(a)$ is the class of an algebraic cycle:
it is the $f$-image of a $\Q$-combination of divisor classes --- the image of an algebraic
class under the algebraic joining-line correspondence, hence an algebraic cycle class over
$\Qbar$, lying in $H^4(X^4_m,\Q)$ by construction.
Field-of-definition bookkeeping: the correspondence above is defined over a cyclotomic field, and
no $\Q$-rationality of the \emph{cycles} is claimed or needed --- algebraicity of a rational Hodge
class is a $\Qbar$-statement (the fields the certifying cycles live over are the subject of
Theorem~\ref{thm:C}, not of this lemma).
\end{enumerate}
\end{lemma}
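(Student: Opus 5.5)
I will prove the four parts in the order (i), (ii), (iii), (iv); each is short once the Hodge-type computation in (i) is in place.

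\emph{Part (i).} For a zero-sum triple $\beta'=(b_0,b_1,b_2)$ of nonzero residues on the Fermat curve $C=X^1_m$, the eigenline $\V(t\beta')\subset H^1_{\mathrm{prim}}(C,\C)$ has Hodge type $(1,0)$ if $\sum_i\fr{tb_i}=m$ and type $(0,1)$ if the sum is $2m$. These are the only values, since the sum is $\equiv0\pmod m$ and lies strictly between $0$ and $3m$. This is Shioda's criterion at $n=1$, the same criterion the paper quotes for grade.

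The Hodge hypothesis on $a=\beta'*\gamma'$ gives
\[
\sum_i\fr{tb_i}+\sum_j\fr{tc_j}=3m
\]
for every unit $t$. So exactly one of the two triples has sum $m$ and the other $2m$. Hence $\V(t\beta')\otimes\V(t\gamma')$ has type $(1,0)\otimes(0,1)$ or $(0,1)\otimes(1,0)$, and is of type $(1,1)$ in $H^2(C\times C)$ by Künneth.

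For the stabilizer remark, an ordered $t$ fixes $a$ coordinatewise if and only if it fixes both $\beta'$ and $\gamma'$ coordinatewise. Therefore $t\mapsto(t\beta',t\gamma')$ and $t\mapsto ta$ have the same fibres, and the summands of $B(a)$ and $M(a)$ are in bijection.

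\emph{Part (ii).} The group $\Gal(\Q(\zt_m)/\Q)$ permutes the $G\times G$-eigenspaces of $H^2(C\times C,\C)$ through $\sigma_t$, sending the $(\beta',\gamma')$-eigenline to the $(t\beta',t\gamma')$-eigenline. The block $B(a)$ is the sum over a full orbit, so it is stable under this action and is therefore the complexification of $B(a)_\Q$.

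Since $B(a)$ is of pure type $(1,1)$ by (i), every element of $B(a)_\Q$ is a rational $(1,1)$ class on the smooth projective surface $C\times C$. By Lefschetz $(1,1)$ it is a $\Q$-combination of divisor classes.

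\emph{Part (iii).} I will cite the Shioda--Katsura join construction \cite[Thm.~1.7]{SK} in the form of \cite[Thm.~2.2]{daSilva}. The map $f$ is $\Gamma_*$ for the closure $\Gamma$ of the blow-up/quotient/blow-down chain. This $\Gamma$ is an algebraic cycle of codimension $3$ on $(C\times C)\times X^4_m$, defined over $\Q(\zt_{2m})$. Because the action is by a cycle class with $\Q$-coefficients, $f$ preserves rational Betti cohomology.

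The identities $G$-equivariance, the eigenline matching $\V(t\beta')\otimes\V(t\gamma')\to\V(ta)$, and isomorphy onto the structural summand are the content of the cited theorem; I would state them as they are quoted there. Rescaling by $1/m$ is harmless over $\Q$.

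\emph{Part (iv).} Combine (ii) and (iii). The restriction of $f$ to $B(a)$ is an isomorphism onto $M(a)$ (by the eigenline bijection of (i)) and is defined over $\Q$, so $f(B(a)_\Q)=M(a)_\Q$. For $x\in M(a)_\Q$ take the preimage $y=\sum q_iD_i$ with the $D_i$ divisor classes. Then $x=\Gamma_*(y)=\sum q_i\,\mathrm{pr}_{2*}(\Gamma\cdot\mathrm{pr}_1^*D_i)$ is the class of an algebraic cycle defined over $\Qbar$.

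The main obstacle is the eigenline bookkeeping in (iii). One has to pin down that the cited isomorphism sends exactly the $(t\beta',t\gamma')$ line to $\V(ta)$, and not to another ordering, and that the scalar is nonzero even though the $H^1$ lines are transcendental. I would check this against the explicit formula for the join on eigen-forms in \cite{Shioda79}: the scalar there is a product of nonvanishing Gamma-type factors.
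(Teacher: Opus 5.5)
Your proposal is correct and follows the paper's proof essentially step for step: the grade bookkeeping $|t\beta'|+|t\gamma'|=3$, with each summand in $\{1,2\}$, for (i); Galois-orbit stability plus Lefschetz $(1,1)$ for (ii); citation of \cite[Thm.~2.2]{daSilva} for (iii); and their combination for (iv). The step you flag as the main obstacle needs no Gamma-factor computation: the paper gets the nonzero scalar from facts you already quote, namely that $f$ is injective on its summand and that $G$-equivariance together with the one-dimensionality of $\V(ta)$ forces each line $\V(t\beta')\otimes\V(t\gamma')$ to map isomorphically onto $\V(ta)$.
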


\begin{proof}
(i) is the grade bookkeeping $|t\beta'|+|t\gamma'|=3$, each summand in $\{1,2\}$ (verified exactly
for every printed instance in the ancillary identities). (ii): Galois permutes the summands of
$B(a)$ (the index set is one orbit), so $B(a)$ is defined over $\Q$ as a sub-Hodge structure; being
purely $(1,1)$, its rational part is divisorial by Lefschetz $(1,1)$. (iii) is \cite[Thm.~2.2]{daSilva} (cf.\ \cite{Shioda79}, \cite[Thm.~1.7]{SK}): $f$ is a $G$-equivariant
isomorphism onto its image, so its restriction to each one-dimensional eigenline is injective,
i.e.\ a nonzero scalar, and equivariance matches the source and target characters. (iv)
combines (i)--(iii). (An explicit block projector --- an orbit sum of graph correspondences
with rational, Galois-symmetric coefficients, algebraic over $\Q$ --- can be written down, but
is not needed: the class handed to $f$ already lies in $B(a)_\Q$.)
\end{proof}

\begin{proof}[Proof of Theorem~\ref{thm:A}]
A split $a=\beta'\uplus\gamma'$ into zero-sum triples realizes $a\sim\beta'*\gamma'$, and by
multiplicity one $M(a)$ is the $f$-image of the block $B(a)$ of Lemma~\ref{lem:transport}. By the
lemma, every rational Hodge class of $M(a)$ is the $f$-image of a $\Q$-combination of divisor
classes on $C\times C$, with a nonzero scalar on each eigenline (part (iii) of the lemma) ---
algebraic, because the image of an algebraic class under an algebraic correspondence is
algebraic. Geometrically the correspondence is the joining-line construction --- over
$(z,w)\in C\times C$ the line $\lambda z+\mu w$ lies on $X^4_m$:
$\sum(\lambda z_i)^m+\sum(\mu w_j)^m=\lambda^m\cdot0+\mu^m\cdot0$ --- and on decomposable
algebraic inputs its effect is the ruled join $f(Z_1\otimes Z_2)=m\,Z_1\wedge Z_2$
\cite[Thm.~2.2(c)]{daSilva}; for a general divisor no single ruled-surface representative of
the image class is claimed, and only the correspondence-image statement is used.
\end{proof}
\begin{remark}
Quasi-decomposability tests only decompositions with both factors Hodge; curves carry no Hodge classes in
$H^1$, so the $(P1)/(P2)$ formalism is structurally blind to this odd$\otimes$odd branch. At $m=39$:
$(1,7,16,22,34,37)=(1,16,22)\uplus(7,34,37)$ and $(1,14,16,22,29,35)=(1,16,22)\uplus(14,29,35)$; the
second has $j(a)=p^2$ exactly at every computed split prime --- consistent with a divisorial
certificate whose class is fixed by the Frobenius elements in question (algebraicity alone does
not force this: a divisor defined only over an extension can carry a nontrivial finite
character), the calibration promised in the Notation.
\end{remark}

\section{The two-pair closure theorem (Theorem~\ref{thm:Aprime})}\label{sec:Aprime}
\begin{proof}
Write $\delta=p_1*p_2=(k,m-k,\ell,m-\ell)\in\Ds^2_m$ (Aoki's all-pairs classes, represented by linear
cycles with nonzero self-pairing, \cite[Thm.~1-1]{Aoki87}). The multiset identity gives $a*\delta\sim S*Q$
on $X^8_m$ ($\sim$ a coordinate permutation, an algebraic automorphism).
(1)~$Q$ is a grade-$2$ Hodge quadruple: its rational Galois block on the Fermat surface is
purely of type $(1,1)$, hence divisorial by Lefschetz $(1,1)$, and claim$(Q)$ follows.
(2)~For the $\sigma_{5,1}$ blocks ($m=45,d=9$ and $m=105,d=21$; $\gcd(1,d)=1$) claim$(S)$ is
\cite[Thm.~2-1]{Aoki87}; for the $\sigma_{5,3}$ block at $m=105$ note $S=3\cdot\sigma_{5,1}$ at level $35$
and apply Lemma~\ref{lem:infl}. (In general a $5$-standard block $\sigma_{5,x}$ with
$g=\gcd(x,d)>1$ is the $g$-inflation of $\sigma_{5,x/g}$ at level $m/g$, where the gcd is $1$, so
Lemma~\ref{lem:infl} and Thm.~2-1 cover every admissible block.)
(3)~claim$(S*Q)$ is \cite[Thm.~1-4(i)]{Aoki87} ($r=4$, $s=2$, $n=8$).
(4)~claim$(a)$ is \cite[Thm.~1-4(ii)]{Aoki87}: \emph{if there exists $\delta\in\Ds^s_m$ with
claim$(\alpha*\delta)$, then claim$(\alpha)$}---applied with $\alpha=a$, $s=2$.
\end{proof}
\begin{lemma}[inflation]\label{lem:infl}
Let $m=e\,m'$ and $\beta$ a character of level $m'$ (all entries nonzero mod $m'$, so all
entries of $e\beta$ are nonzero mod $m$). Then
$\mathrm{claim}_{m'}(\beta)\Rightarrow\mathrm{claim}_m(e\beta)$.
\end{lemma}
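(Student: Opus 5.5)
The plan is to realize $\V(e\beta)$ at level $m$ as the pullback of $\V(\beta)$ along the finite morphism
\[
\pi_e\colon X^n_m\to X^n_{m'},\qquad (x_i)\mapsto(x_i^e),
\]
and then transport algebraic cycles by $\pi_e^*$. Here $n$ is the dimension of the Fermat variety on which $\beta$ lives; the lemma is used for sextuples ($n=4$) and quadruples ($n=2$), and the argument does not depend on $n$.

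First, the morphism is well defined and finite: $\sum (x_i^e)^{m'}=\sum x_i^m=0$. It is equivariant for the inclusion of groups $\mu_m^{n+2}/\mu_m\to\mu_{m'}^{n+2}/\mu_{m'}$, $\zeta\mapsto\zeta^e$. Under this map a character $\beta$ of the level-$m'$ group pulls back to the character $e\beta$ of the level-$m$ group. Since $\pi_e^*$ is $G$-equivariant, it carries the $\beta$-isotypic part of $H^n(X^n_{m'},\C)$ into the $e\beta$-isotypic part of $H^n(X^n_m,\C)$.

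Second, I need $\pi_e^*$ to be nonzero, indeed injective, on $\V(\beta)$. For this I would use the projection formula $\pi_{e*}\pi_e^*=\deg(\pi_e)\cdot\mathrm{id}$, where $\deg\pi_e=e^{n+1}\neq0$ (the degree of the covering of projective spaces restricted to the hypersurface, since $\pi_e^{-1}(X^n_{m'})=X^n_m$ scheme-theoretically). So $\pi_e^*$ is injective on $H^n(X^n_{m'},\Q)$. Primitivity is preserved up to the hyperplane class, which is killed by the character projection because $\beta\neq0$. Multiplicity one then gives $\pi_e^*\V(\beta)=\V(e\beta)$. The hypothesis that all entries of $e\beta$ are nonzero mod $m$ is exactly what makes $e\beta$ index a primitive eigenline; this is the bookkeeping already recorded in the Notation, where content-$e$ classes are described as inflations.

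Third, I transfer claim. By the reformulation in the Notation, $\mathrm{claim}_{m'}(\beta)$ means there is a rational algebraic cycle $Z$ on $X^n_{m'}$ with $\omega_\beta(Z)\neq0$. Then $\pi_e^*Z$, the flat pullback along a finite morphism of smooth varieties, is a rational algebraic cycle on $X^n_m$. By the two previous steps,
\[
\omega_{e\beta}(\pi_e^*Z)=\pi_e^*\,\omega_\beta(Z)\neq0,
\]
using that pullback commutes with the averaged projectors under the group homomorphism. Galois-orbit invariance of claim then gives $\mathrm{claim}_m(e\beta)$.

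The main obstacle is the compatibility of the projectors in the third step. The projector $\omega_{e\beta}$ averages over the larger group at level $m$. Elements of the kernel $\mu_e^{n+2}/\mu_e$ of the group map act trivially on $\pi_e^*$-classes, and $e\beta$ is trivial on that kernel, so the average factors through the level-$m'$ projector. I would check this identity directly on the character sums rather than cite it.
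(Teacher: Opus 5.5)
Your proposal is correct and follows essentially the same route as the paper. Both pull back along $\pi\colon(x_i)\mapsto(x_i^e)$, use equivariance to land in $\V(e\beta)$, use $\pi_*\pi^*=(\deg\pi)\,\mathrm{id}$ and multiplicity one to get $\pi^*\V(\beta)=\V(e\beta)$, and transport a certifying cycle via $\omega_{e\beta}(\pi^*Z)=\pi^*\omega_\beta(Z)\neq0$; your kernel-averaging check of this projector identity spells out what the paper leaves implicit, and the paper justifies the flat pullback by miracle flatness. One wording slip: the group map $\zeta\mapsto\zeta^e$ from $\mu_m^{n+2}/\mu_m$ to $\mu_{m'}^{n+2}/\mu_{m'}$ is a surjection with kernel $\mu_e^{n+2}/\mu_e$, not an inclusion, as your last paragraph in fact uses.
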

\begin{proof}
$\pi\colon X^n_m\to X^n_{m'}$, $(x_i)\mapsto(x_i^e)$ is a morphism with $\pi(\zt x)=\zt^e\pi(x)$ for
$\zt\in\mu_m$, so $\pi^*$ maps $\V(\beta)$ into $\V(e\beta)$ equivariantly (the entries of $e\beta$ are nonzero, so the target eigenline exists); $\pi$ finite surjective gives
$\pi_*\pi^*=(\deg\pi)\,\mathrm{id}\neq0$ on $\Q$-cohomology, so $\pi^*\xi$ spans $\V(e\beta)$, and for a
representing cycle $Z$, $\omega_{e\beta}(\pi^*Z)=\pi^*(\omega_\beta(Z))\neq0$. (Here $\pi^*Z$
is the pullback in $CH_\Q$: $\pi$ is a finite surjective morphism of smooth projective
varieties, hence flat by miracle flatness, and flat pullback is compatible with cycle classes.)
\end{proof}
\begin{remark}
da Silva's quasi-decomposability adjoins one pair and splits $4+4$; the two-pair $6+4$ pattern is outside
the definition. The three presentations were found by exhaustive iterated-closure search and are unique
per class; equivalently the three classes are sum-preserving two-entry surgeries of standard sextuples
($(10,40)\mapsto(20,30)$ at $45$; $(45,90)\mapsto(50,85)$, $(85,100)\mapsto(90,95)$ at $105$). The characters multiply
exactly: $\chi_a=\chi_S\chi_Q$, i.e.\ $u(a,p)=u(S,p)\,u(Q,p)$ at every split prime. (Juxtaposition
gives $\chi_a\chi_\delta=\chi_S\chi_Q$ for the adjoined $\delta=p_1*p_2$; the displayed form uses
$\chi_\delta=\chi^{k+\ell}(-1)=1$ --- automatic for odd $m$, and at $m=66$ because
$k+\ell=26$ is even.) The surviving $m=33$ class
resists this genre exhaustively (no augmentation by up to five vanishing pairs admits a certifying
partition; $5\nmid33$, so no standard blocks exist).
\end{remark}

\section{Level-lifted closure of the witness (Theorem~\ref{thm:App})}\label{sec:App}
\begin{proof}
All identities are exact (machine-verified; the ancillary re-derives each). The augmented multiset
partitions as stated; $Q$ is grade-$2$ Hodge. For $S$: the pair $(33,33)$ is a legitimate element of
$M_{66}(1)$ ($\fr{33t}+\fr{33t}=66$ for every odd $t$; multiplicities are permitted), and in da
Silva's coordinates $S=\beta\#\gamma$ with $\beta=(2,32,65;33)$,
$\gamma=(8,41,50;33)$ --- an element of his $U^{2,2}_{66}$, the distinguished entries $33,33$ summing
to $0\bmod66$ (the definition preceding \cite[Cor.~2.3]{daSilva}); Corollary~2.3(b) with $(r,s)=(2,2)$
and Lefschetz~$(1,1)$ for $\beta,\gamma$ give claim$(S)$. The hypotheses of Corollary~2.3(b) are
exactly: $r,s$ even, algebraicity of both factors, nonzero entries, and the matching condition ---
each verified above; the self-paired value $b_3=c_3=33=m/2$ is admissible
(Remark~\ref{rem:selfpair}). Then, exactly as in
Theorem~\ref{thm:Aprime}: claim$(S*Q)$ by \cite[Thm.~1-4(i)]{Aoki87}; $S*Q\sim a*\delta$ with
$\delta=(1,65,25,41)\in\Ds^2_{66}$; claim$(a)$ by \cite[Thm.~1-4(ii)]{Aoki87}. Descent is
Lemma~\ref{lem:descent}.
\end{proof}
\begin{remark}[self-pair admissibility]\label{rem:selfpair}
The hypotheses of the pairing statement used above --- membership in $U^{2,2}_{66}$ and
Corollary~2.3(b) of \cite{daSilva} --- are: even $r,s$, nonzero entries, algebraicity of both
factors, and the matching condition $b_3+c_3\equiv0$. Pairwise distinctness of the entries is
not among them, and the underlying structure isomorphism \cite[Thm.~2.2]{daSilva} carries no
nondegeneracy hypothesis on the characters; the self-paired distinguished value
$b_3=c_3=33=m/2$ is therefore legitimate. This is the one point where the even level is
essential: no self-pair exists at an odd level (Remark~\ref{rem:App}(iv)).
\end{remark}
\begin{lemma}[descent]\label{lem:descent}
Let $m=e\,m'$ and $\beta$ a character of level $m'$ (equivalently: $e\beta$ has content $e$
and all entries nonzero mod $m$). Then
$\mathrm{claim}_m(e\beta)\Rightarrow\mathrm{claim}_{m'}(\beta)$.
\end{lemma}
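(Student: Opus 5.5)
The plan is to reverse the argument of Lemma~\ref{lem:infl}, using the finite morphism $\pi\colon X^n_m\to X^n_{m'}$, $(x_i)\mapsto(x_i^e)$, and its pushforward $\pi_*$ in place of $\pi^*$. Throughout, $\pi$ is finite, surjective and flat of degree $e^{n+1}$: its fibres are orbits of the subgroup $K=\mu_e^{n+2}/\mu_e\subset G_m$, and $X^n_{m'}=X^n_m/K$. Both $\pi^*$ and $\pi_*$ are induced by the graph correspondence of $\pi$, so they are compatible with cycle classes in $CH_\Q$ and are equivariant for the surjection $G_m\to G_{m'}$.

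\textbf{Step 1 (eigenspaces).} I would first identify $\pi^*$ on primitive cohomology with the inclusion of the $K$-invariants. The characters of $G_m$ trivial on $K$ are exactly the $e\beta$ with $\beta$ of level $m'$. Since $\pi_*\pi^*=e^{n+1}\,\mathrm{id}$ and $\pi^*\pi_*=\sum_{k\in K}k^*$, the map $\pi^*$ identifies $H^n_{\mathrm{prim}}(X^n_{m'})$ with the $K$-invariant part of $H^n_{\mathrm{prim}}(X^n_m)$. In particular $\pi^*\V(\beta)=\V(e\beta)$, which is the statement already used in Lemma~\ref{lem:infl}. Applying $\pi_*$ then gives $\pi_*\V(e\beta)=\V(\beta)$, with $\pi_*\pi^*$ acting by the nonzero scalar $e^{n+1}$.

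\textbf{Step 2 (transport of the projection).} Suppose $\mathrm{claim}_m(e\beta)$. Then some rational cycle $Z$ on $X^n_m$ has $\omega_{e\beta}(Z)\neq0$. I set $Z'=\pi_*Z\in CH(X^n_{m'})_\Q$, which is algebraic by proper pushforward. It remains to show $\omega_\beta(Z')\neq0$. Aoki's averaged projector $\omega_\beta$ is built from the $G_{m'}$-action, so equivariance of $\pi_*$ gives
\[
\omega_\beta(\pi_*Z)=\pi_*\bigl(P_{e\beta}\,\mathrm{cl}(Z)\bigr),
\]
where $P_{e\beta}$ is the projector onto $\V(e\beta)$. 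This identity needs the observation that only $K$-invariant components survive $\pi_*$: for a character $\chi$ of $G_m$ nontrivial on $K$, one has $\pi_*\circ\pi^*\circ\pi_*=e^{n+1}\pi_*$ and $\pi_*k^*=\pi_*$, so $\pi_*$ kills every nontrivial $K$-isotypic component.

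\textbf{Step 3 (nonvanishing).} Finally, $P_{e\beta}\,\mathrm{cl}(Z)=\omega_{e\beta}(Z)$ is a nonzero vector of $\V(e\beta)=\pi^*\V(\beta)$. Writing it as $\pi^*\xi$ with $\xi\neq0$, I get $\pi_*\pi^*\xi=e^{n+1}\xi\neq0$. Hence $\omega_\beta(Z')\neq0$, which is $\mathrm{claim}_{m'}(\beta)$.

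\textbf{Main obstacle.} The main point to get right is Step~2, namely the precise normalization of Aoki's projector and its compatibility with $\pi_*$ under the group surjection $G_m\to G_{m'}$. I would handle it by working purely at the level of cohomology, not cycles, and choosing orthogonal isotypic decompositions. The scalar factors are irrelevant, since only nonvanishing matters.
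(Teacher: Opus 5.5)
Your proof is correct and follows the paper's route. You push forward along $\pi$, use $\pi_*\pi^*=(\deg\pi)\,\mathrm{id}$ and $\pi^*\V(\beta)=\V(e\beta)$ to get $\pi_*\colon\V(e\beta)\xrightarrow{\sim}\V(\beta)$, and conclude because proper pushforward preserves algebraic classes. Your Step~2 argument (that $\pi_*$ kills the non-$K$-invariant isotypic parts, so $\omega_\beta\circ\pi_*=\pi_*\circ\omega_{e\beta}$) is sound and fills in what the paper leaves implicit; you need it only because you use the nonzero-projection form of claim, whereas with the ``spanned by algebraic classes'' form linearity of $\pi_*$ already suffices.
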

\begin{proof}
$\pi\colon X^n_m\to X^n_{m'}$, $(x_i)\mapsto(x_i^e)$ is a finite surjective morphism
($\sum x_i^m=\sum(x_i^e)^{m'}$ identically) intertwining the group actions along the $e$-th-power
surjection $G_m\twoheadrightarrow G_{m'}$; hence $\pi_*\V(e\beta)\subseteq\V(\beta)$ (again: the entries of $e\beta$ are nonzero, so $\V(e\beta)$ exists). Since
$\pi_*\pi^*=(\deg\pi)\,\mathrm{id}\neq0$ and $\pi^*\V(\beta)=\V(e\beta)$ with both eigenlines
$1$-dimensional, $\pi_*\colon\V(e\beta)\xrightarrow{\ \sim\ }\V(\beta)$. Proper pushforward carries
algebraic cycle classes to algebraic cycle classes ($\mathrm{cl}\circ\pi_*=\pi_*\circ\mathrm{cl}$),
and $\pi$ is finite of relative dimension zero, so there is no degree shift or Tate twist.
\end{proof}
\begin{remark}\label{rem:App}
(i) With the inflation lemma (Lemma~\ref{lem:infl}), claim is level-transparent:
$\mathrm{claim}_{m'}(\beta)\Leftrightarrow\mathrm{claim}_m(e\beta)$ --- every lift of an open class
is an independent chance to close it. (ii) The mechanism is selective: the two-pair-at-the-double search does not fire for the induced
copies of the $m=54$ classes at $108$ or of the $m=50$ class at $100$ (all of these close at their own
levels at depth $3$--$4$; even-census receipts, \S\ref{sec:even}), and the beyond-machinery $m=66$ classes resist the
single two-pair at level $66$ and its lift $132$ (they too close at depth $3$;
even-census receipts, \S\ref{sec:even}). (iii) Consistency with
the field obstruction (\S\ref{sec:C}): the constructed cycles (Lefschetz divisors on $X^2_{66}$,
Aoki's linear spaces with $\varepsilon=\zt_{132}$, radicals) are not $\Q(\zt_{66})$-rational, and
$\Q(\zt_{66})=\Q(\zt_{33})$ --- the obstruction forbids exactly the $\Q(\zt_{33})$-rational
certificates, and this theorem produces non-rational ones (their precise minimal field of
definition is not computed here). The local Frobenius values agree exactly:
$u(2w@66,67)=\zt_6=u(w@33,67)$. (iv) The
uniqueness of the quasi-witness $v=33$ shows the self-paired $m/2$ element --- which exists only at
even levels --- is strictly load-bearing: this is why the exhaustive level-$33$ search (five
augmentation pairs, all partition types) could not see the closure.
\end{remark}

\section{The census and its completeness (Theorem~\ref{thm:B})}\label{sec:B}
For each odd $m$: enumerate grade-$3$ Hodge multisets (meet-in-the-middle on half-unit sum profiles);
reduce to Galois orbits; classify decomposable $\to$ quasi $\to$ standard (Prop.~\ref{prop:D}) $\to$
$*$-split (Thm.~\ref{thm:A}) $\to$ survivors; the surviving orbits are then closed by
Theorems~\ref{thm:Aprime}/\ref{thm:App} and level transport. The engine reproduces
the Shioda/da Silva anchors inside the census range ($(P_m)$ at $m=21,27$; the $m=33$ witness) and was reproduced, on all $89$
odd levels of the census list ($21\le m\le199$, $m\neq23$; the omitted small levels and $m=23$ are
classical \cite[Thms.~2.6--2.7]{daSilva}), by a second, algorithmically independent
implementation (structured-array meet-in-the-middle), and, as a third method, by a direct
brute-force enumeration --- every sorted zero-sum sextuple generated exhaustively, no
meet-in-the-middle, no code shared with either implementation --- which reproduces the
representative lists and their canonical hashes at every census level $m\le143$ (live in the
runner for $m\le45$; the shipped receipt covers the rest). After Theorem~\ref{thm:A} (which closes the
$*$-split classes at $39,117,195$), seven beyond-machinery orbits remain: four primitive
($m=33,45,105\times2$) and the induced copies at $99,135,165$; after Theorem~\ref{thm:Aprime} (which closes $45$ and both $105$, hence by Lemma~\ref{lem:infl}
also $m=135$) and Theorem~\ref{thm:App} (which closes $33$, hence $99$ and $165$),
nothing remains. Induced copies are tied to their primitives by the finite pull--push
correspondences of Lemmas~\ref{lem:infl} and~\ref{lem:descent}, which transfer claim in both
directions.

\begin{lemma}[census completeness]\label{lem:complete}
Fix $m$ and consider the shipped generator, classifiers, and canonizer.
\begin{enumerate}
\item[(i)] Every element of $M_m$ has even length $2g$, $g$ its grade: for nonzero entries
$\fr{(m-t)x}=m-\fr{tx}$, so $\sum_i\fr{(m-t)a_i}=Lm-\sum_i\fr{ta_i}$ and grade-constancy forces
$g=L/2$. In particular $M_m(1)$ consists exactly of the vanishing pairs $\{k,m-k\}$ (including the
self-pair $k=m/2$ at even $m$), and a grade-$3$ sextuple is decomposable exactly when it
contains a vanishing pair --- the complementary quadruple is then automatically grade-$2$ Hodge.
\item[(ii)] The quasi-decomposability test is complete for da Silva's definition: for a pair-free
sextuple $a$, the $8$-multiset $a\uplus\{k,m-k\}$ splits in $M_m$ only with part-lengths $(2,6)$ or
$(4,4)$, by (i); in every $(2,6)$ split the pair is either $\{k,m-k\}$ itself or $\{a_i,m-a_i\}$
with $k\in\{a_i,m-a_i\}$, and in both cases the split is the trivial one (the pair part is the
adjoined pair, the sextuple part is $a$ itself) --- excluded by the non-triviality clause of
\cite[Def.~2.4]{daSilva}. Hence $a$ is quasi-decomposable iff some
$a\uplus\{k,m-k\}$ splits into two grade-$2$ Hodge quadruples --- exactly the test performed.
\item[(iii)] The meet-in-the-middle generator produces exactly the grade-$3$ Hodge sextuples: the
join condition enforces $\sum_i\fr{ta_i}=3m$ for every $t$ in a half-unit system, and the
complement identity of (i) extends it to all units; conversely every sorted sextuple is the join of
its two sorted halves, each of which occurs in the enumeration of \emph{all} $3$-multisets of
nonzero residues.
\item[(iv)] Canonization retains the lexicographically least sorted conjugate --- a system of
distinct representatives for the $(\Z/m)^\times$-orbits.
\end{enumerate}
Consequently the census verdicts (decomposable / quasi / standard / $*$-split / survivor) coincide
with their definitions level by level; the standard classifier is complete for odd $m$ by
Proposition~\ref{prop:D}.
\end{lemma}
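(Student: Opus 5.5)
I will prove the four parts of Lemma~\ref{lem:complete} separately. Parts (i) and (ii) are pure arithmetic on multisets of residues, and parts (iii) and (iv) are statements about the shipped code. The consequence then follows by chaining the parts with Proposition~\ref{prop:D}.

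For (i), I take an element of $M_m$ of length $L$ and grade $g$. For $x\not\equiv0$ the map $t\mapsto m-t$ preserves units, and $\fr{(m-t)x}=m-\fr{tx}$. Summing over the entries gives $gm=Lm-gm$, so $L=2g$.
\begin{itemize}
\item Grade $1$ therefore means length $2$. The two entries $x,y$ satisfy $x+y\equiv0$, so they form a vanishing pair; conversely every vanishing pair has constant grade $1$. This includes the self-pair $m/2$ at even $m$.
\item A grade-$3$ sextuple containing a pair $\{k,m-k\}$ has a complementary quadruple with constant sum $3m-m=2m$. That quadruple is grade-$2$ Hodge, so the sextuple is decomposable.
\item Conversely, a nontrivial split of a sextuple in $M_m$ has part lengths that are even and positive. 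The only possibilities are $(2,4)$, so decomposability means containing a pair.
\end{itemize}

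For (ii), I take a pair-free sextuple $a$ and adjoin $\{k,m-k\}$, giving an $8$-multiset of grade $4$. By (i), every split has even part lengths, namely $(2,6)$ or $(4,4)$; a three-part split would contain a length-$2$ part.
\begin{itemize}
\item In a $(2,6)$ split, the length-$2$ part is a vanishing pair drawn from $a\uplus\{k,m-k\}$.
\item That pair cannot lie inside $a$, since $a$ is pair-free. So it uses $k$ or $m-k$.
\item If it is $\{k,m-k\}$, the split is the trivial one.
\item Otherwise it is $\{a_i,m-a_i\}$ with the adjoined element equal to $m-a_i$. The remainder then differs from $a$ only by replacing $a_i$ with the other adjoined element, which equals $a_i$. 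As multisets, the split is again trivial.
\end{itemize}
Hence the only non-trivial splits are $(4,4)$ splits into two grade-$2$ quadruples, and this is exactly the test the classifier runs.

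Part (iii) is where I expect the main obstacle, because it depends on the code matching the mathematics. The mathematical content is as follows.
\begin{itemize}
\item Let $H$ be a half-system of units, so that $(\Z/m)^\times=H\sqcup(-H)$ (for $m\ge3$).
\item If $\sum\fr{ta_i}=3m$ holds for all $t\in H$, then the complement identity gives $\sum\fr{-ta_i}=6m-3m=3m$. So the condition holds for all units.
\item The join step pairs the two sorted triples $\{a_0,a_1,a_2\}$ and $\{a_3,a_4,a_5\}$, indexed by their profile vectors $(\fr{ta_i})_{t\in H}$. It accepts exactly when the two profiles sum to $(3m,\dots,3m)$.
\item Every sorted sextuple arises by cutting it into its two sorted halves, and both halves are listed, since all $3$-multisets of nonzero residues are enumerated. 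The zero-sum condition is implied by the grade condition at $t=1$.
\item The two-sided inclusion follows.
\end{itemize}
What I cannot derive on paper is that the implementation really enumerates all $3$-multisets and that the hash join is exact. For this I would rely on the two independent reproductions: the structured-array implementation over all $89$ levels, and the brute-force enumeration through $m=143$.

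For (iv), each $(\Z/m)^\times$-orbit of sorted multisets is finite and totally ordered lexicographically. Taking its minimum is therefore a class function with exactly one value per orbit. Coordinate permutations are already absorbed by sorting.

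Finally, the consequence follows by combining the parts. Parts (iii) and (iv) show that the census sees every orbit exactly once. Parts (i) and (ii) show that the decomposable and quasi-decomposable tests are exact. The standard test is exact because, by Proposition~\ref{prop:D}, standard characters at odd $m$ are precisely the padded arithmetic-progression characters the code checks. The $*$-split test is an exhaustive check over the ten $3{+}3$ partitions of the sextuple, testing whether each triple has zero sum.
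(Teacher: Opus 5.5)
Your proposal is correct and follows essentially the same route as the paper. You use the complement identity $\fr{(m-t)x}=m-\fr{tx}$ for (i), the same case analysis showing every $(2,6)$ split of $a\uplus\{k,m-k\}$ is the excluded trivial one for (ii), the half-unit extension plus splitting a sorted sextuple at position $3$ for (iii), and distinct orbits having distinct lexicographic minima for (iv), deferring the code-level correctness to the independent reimplementation and brute-force reproductions exactly as the paper does.
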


\begin{proof}
(i) is the two displayed identities; for the last claim, a vanishing pair contributes $m$ to
every unit sum, so the complementary quadruple has constant grade $2$. (ii): the parts have even lengths summing to $8$ with grades
$(1,3)$ or $(2,2)$; in a $(2,6)$ split the length-$2$ part is a vanishing pair inside
$a\uplus\{k,m-k\}$, and since $a$ is pair-free it is $\{k,m-k\}$ (complement $=a$) or uses one
adjoined entry, forcing $k\in\{a_i,m-a_i\}$ and complement $(a\smallsetminus\{a_i\})\uplus\{a_i\}=a$
(multiplicities included; the even-$m$ self-pair case is identical). (iii): if the joined profile
sums to $3m$ on the half-units $t$, then at $m-t$ it sums to $6m-3m=3m$; the converse containment
is the split of a sorted sextuple at position $3$. (iv): distinct orbits have distinct minima.
The implementations are additionally calibrated against direct brute force (the shipped
third-method enumeration, which reproduces representative counts and canonical hashes at every
census level $m\le143$; the even-parity engine against brute force for $m\le14$) and
reproduce the Shioda/da~Silva anchors above. The full $89$-level run ---
per-level counts, classification tallies, survivor lists, and per-orbit closure witnesses ---
ships as checksummed data artifacts with the ancillary files.
\end{proof}

\emph{What is certified, and how.} Four tiers, in decreasing logical strength. (1)~\emph{Per-orbit
certificates}: for each of the $78{,}299$ orbit representatives across the $89$ levels, a stored
witness of its classification (the vanishing pair / the quasi split / the standard parameter /
the $*$-split triples), re-verified against the definitions on every run of the supplied runner;
for the seven beyond-machinery orbits the runner re-establishes the \emph{negative} screenings
live (no vanishing pair, no quasi witness at any $k$, not standard, no zero-sum split), so the
terminal labels are re-checked certificates, not labels. (2)~The \emph{completeness lemma}
above, which reduces ``no orbit is missed'' to the definitional correctness of the shipped
generator and canonizer. (3)~The \emph{independent reimplementation}: an algorithmically
independent second program regenerates the census from scratch and matches it level by level
(its complete $89$-level log ships; the runner re-executes it live at anchor levels and
regenerates a fresh sub-range against the per-level summary table of
Appendix~\ref{app:census}). (4)~\emph{Checksummed receipts}
of the historical campaign runs: a checksum certifies file integrity, never recomputation ---
recomputation is what tiers (1)--(3) execute. A per-level summary table
(\texttt{census\_level\_summaries.json}: representative count, tally by kind, survivor list,
and the SHA-256 hash of the canonically sorted representative list) makes ``no missed orbit''
checkable level by level: any regeneration, by any implementation, must reproduce the count and
the hash.

\emph{The bound $199$.} The bound is a computational horizon, not a structural one: every
closure mechanism used here is degree-uniform, and nothing in the method stops at $199$. What
grows is the census cost --- on the reference machine (Apple M4 Max, Python 3.13) the engine
takes $158$\,s at the single level $m=199$, the full two-engine $89$-level sweep of the pinned
receipt took $\approx36$ minutes, and the complete independent rerun $\approx12$ minutes --- and $199$ was
chosen to include, with margin, the largest levels at which new primitive phenomena appear in
the odd census: the two-pair genre at $105$ (the last new primitive beyond-machinery orbit) and
the $*$-split gap family through $195=5\cdot39$. The probes beyond the bound (next paragraph)
found no new phenomenon.

Additional
single-engine censuses at the induced-risk levels beyond $199$ --- computational observations:
these three levels sit outside the witness-tier receipts of Theorem~\ref{thm:B} --- found: at
$231=33\cdot7$ and
$297=33\cdot9$ one beyond-machinery orbit each, the induced copy of the $m=33$ class (content $7$,
resp.\ $9$), closed by Theorem~\ref{thm:App} and transport; at the control level $273=3\cdot7\cdot13$,
two $*$-split classes (closed by Theorem~\ref{thm:A}) and nothing else. They indicate that the
conclusion of Theorem~\ref{thm:B} also holds at $m=231,273,297$; the theorem itself does not
depend on these levels.

\section{The field obstruction and Question~1 (Theorem~\ref{thm:C})}\label{sec:C}

\emph{Normalization.} Fix a split prime $p\equiv1\bmod m$ and a multiplicative character
$\chi\colon\F_p^\times\to\Qbar^\times$ of exact order $m$ (extended by $\chi(0)=0$). Set
\[
S_0(a)=\sum_{\substack{v\in(\F_p^\times)^6\\ v_0+\dots+v_5=0}}\ \prod_{i}\chi^{a_i}(v_i),
\qquad j(a)=\frac{S_0(a)}{p-1}\in\Z[\zt_m]
\]
(the scaling $v\mapsto\lambda v$ is free since $\sum a_i\equiv0$, so $S_0$ is exactly
divisible by $p-1$; the certificate asserts this). By Weil's count of points of diagonal
hypersurfaces \cite[pp.~500--502]{Weil49}, \cite{Weil52},
\[
\#X^4_m(\F_p)=1+p+p^2+p^3+p^4+\sum_a j(a),
\]
the sum over all $a=(a_0,\dots,a_5)$ with $a_i\neq0$ and $\sum a_i\equiv0$; the sign in front
of the character sum is $+1$ in our even dimension, which is what fixes the normalization below.
(The identity is verified exactly in the ancillary at $m=3$, $p=7$, where the left side is
computed by brute force: $2801+889=3690$.) Consequently \emph{geometric} Frobenius at $p$
acts on the \'etale eigenspace $\V(a)\subset H^4_{\mathrm{prim}}$ by $j(a)$, with
$|j(a)|=p^2$ in every embedding --- hence on $\V(a)(2)$ by $u(a,p)=j(a)/p^2$ (arithmetic Frobenius acts by the inverse scalar;
every statement in this paper is about geometric Frobenius); the conjugate eigenlines carry
$u(ta,p)$, computed by replacing $\chi$ with $\chi^t$. Under an inhomogeneous convention
($\sum v_i=1$) the Jacobi symbol changes by a sign that the point-count formula compensates;
the certificate's asserted $(u')^3=-1$ pins the geometric order regardless. A $30$-digit
numeric cross-check against the Gauss-sum product $\prod_ig(\chi^{a_i})/p^3$ corroborates
the normalization (ancillary; corroboration only --- the certificate itself is exact).
The certificate's character is pinned: $\chi$ is the character of $\F_{67}^\times$ with
$\chi(g)=\zt_{33}$ on the smallest primitive root $g=2$ (the shipped code computes discrete
logarithms base $2$); replacing $\chi$ by a conjugate character permutes the twenty conjugate
values and can exchange $\zt_6\leftrightarrow\zt_6^{-1}$ at $t=1$ --- the
convention-independent assertions are the exact order six and the two-value distribution.

\begin{proof}
Fix a prime $\ell\neq67$ and an embedding $\iota\colon\Qbar\hookrightarrow\Qbar_\ell$;
cohomology in this proof is $\ell$-adic \'etale cohomology with $\Qbar_\ell$-coefficients
(the eigenspaces correspond under comparison), and the algebraic integers
$u(ta_0,67)\in\Z[\zt_{33}]$ are read in $\Qbar_\ell$ through $\iota$; the asserted
identities $(u')^3=-1$, $(u')^6=1$, $u'\neq1$ hold in $\Z[\zt_{33}]$, hence under every
embedding, so no choice matters. The $\ell$-adic class of a cycle over $K$ is $G_K$-fixed. Take $\mathfrak p\mid67$ in $\Q(\zt_{33})$
($67\equiv1\bmod33$ splits, good reduction). $\mu_{33}\subset\F_{67}$ makes the $G$-action
$\F_{67}$-rational, so Frobenius preserves each $\V(ta_0)$ and acts on $\V(ta_0)(2)$ by
$u(ta_0,67)$ as fixed in the Normalization above. An exact
integer dynamic program in $\Z[x]/\Phi_{33}$ gives $u(a_0,67)=1+\zt_3=\zt_6$, and the $20$ unit-indexed
values $u(ta_0,67)$, $t\in(\Z/33)^\times$ --- the Frobenius scalars on the \emph{twenty} conjugate
eigenlines of $M(a_0)$: the ordered stabilizer is trivial ($7t\equiv7$ forces $t=1$), the multiset
stabilizer $\langle4\rangle$ has order $5$, so the twenty eigenlines fall into four
coordinate-permutation classes of five; the scalar --- a symmetric function of the entries --- is
constant on each class, and the four class-values coincide in pairs --- take exactly the two
values $\zt_6^{\pm1}$, each on ten eigenlines. The certificate asserts \emph{exact order six}
on every conjugate --- $(u')^3=-1$, $(u')^6=1$, $u'\neq1$ --- and the geometric scalar is
independent of the Jacobi-sum sign convention, so each eigenvalue is a primitive sixth root of
unity, hence $\neq1$. A Frobenius-fixed vector scaled by $\neq1$ vanishes componentwise.
Finally, the choice of $\mathfrak p\mid67$ is immaterial: the primes above $67$ in
$\Q(\zt_{33})$ form one $\Gal(\Q(\zt_{33})/\Q)$-orbit, and replacing $\mathfrak p$ conjugates
the chosen character $\chi$, hence permutes the twenty computed values $u(ta_0,67)$ among
themselves; since \emph{each} of them has exact order $6$, the conclusion is the same at every
prime above $67$.
\end{proof}
$W$ is defined over $\Q$ and its translates are $\Q(\zt_{33})$-rational; by the census the $m=33$ class
is the unique non-quasi non-standard class at that degree, so its projection vanishes and Question~1 is
answered negatively. The YES-branch would have proved the Hodge conjecture at these classes; the NO says
only that this candidate cannot certify it, and imposes a sharp local residue-degree constraint
on any number field of definition of a certifying cycle.

\section{The closed classes' dossiers}\label{sec:dossier}
\emph{The $m=33$ witness (closed by Theorem~\ref{thm:App})}: indicated eigenvalue field $\Q(\sqrt{-3})$ ---
throughout, \emph{eigenvalue field} means the field generated by the computed values of
$\chi_a$ at the sampled split primes, a numerical label and not the fixed field of the
multiset stabilizer --- all
entries $\equiv1\bmod3$; its Tate-normalized Jacobi-sum Hecke character $\chi_a$ is of
finite order (see the Notation; cf.\ \cite{Weil52}), and its computed values at split primes
lie in $\mu_6$: kernel primes occur
($p=859$: every conjugate value exactly $1$), and order $6$ is attained ($p=67$, the prime
Theorem~\ref{thm:C} uses; only the certified per-prime obstruction is used, and no global order
is claimed). The dossier's remaining role is the acceptance test for explicit cycles
(Theorem~\ref{thm:App}'s pass by construction). A Weil-class alternative would need a precise
carrier: Weil classes on abelian \emph{fourfolds} of Weil type are now unconditionally
algebraic (Markman \cite{Markman1}, with Schoen's degeneration argument \cite{Schoen}; for
discriminant one there are now further proofs, \cite{FloccariFu} and \cite{vGR}), but
the Shioda--Katsura realization carries $\V(w)$ on CM abelian varieties of dimension $\ge10$
(forced by the faithful $\Q(\zt_{33})$-action), and no algebraic compression onto a
$\Q(\sqrt{-3})$-Weil fourfold is known --- the route stays conditional on that open
compression, and is now unnecessary.

\emph{The closed classes $m=45,105$}: their indicated eigenvalue fields $\Q(\zt_9),\Q(\zt_7),\Q(\zt_{21})$
(degrees $6,6,12$) exceed the imaginary-quadratic multiplication field required by the published
algebraicity theorems for Weil-type classes known to us (Schoen \cite{Schoen}; Markman
\cite{Markman1}); the CM-field secant-sheaf program \cite{Markman2}, which would reach larger fields,
is explicitly conditional on a semiregularity ``not addressed yet'' (loc.\ cit.); on
the canonical CM carriers (dimensions $12$ and $24$) restriction of scalars to an imaginary quadratic
subfield shifts the cohomological degree away from $4$, so no descent to the covered case exists.
For the $\Q(\zt_{21})$ class at $105$, Theorem~\ref{thm:Aprime}'s Fermat-internal route is,
among the mechanisms compared here, at present
the only proof of algebraicity known to the author (a gap class: $\nu\notin S_{105}$,
$2\nu\in S_{105}$); the
$m=45$ and $\Q(\zt_7)$ $m=105$ classes are also reachable by the standard calculus --- $\nu\in S_{45}$,
resp.\ $\nu\in S_{105}$, by explicit four-generator $\pm1$ certificates (re-derivable via the ancillary
lattice test), hence algebraic by Lemma~4.1(iii) of \cite{Aoki00} as well, Theorem~\ref{thm:Aprime}
supplying the explicit two-pair presentation. The computed per-prime values (orders $9/42/7$ at
sampled split primes; computational observations) indicate analogous local residue-degree
constraints at the closed levels; explicit equations remain open.

\section{Methods, code availability, and provenance}\label{sec:methods}
Exact objects throughout (Gauss/Jacobi sums as algebraic integers; the decisive certificate in
$\Z[x]/\Phi_{33}$ with no floating point; the exact-twist certificate framework follows the surface
companion \cite{P1}); the only floating-point computation anywhere in the paper is the $30$-digit
Gauss-sum cross-check of \S\ref{sec:C}, which corroborates a normalization and certifies
nothing.
Every proof-critical computational claim of Theorem~\ref{thm:B} carries either a stored
per-orbit witness certificate or an algorithmically independent reimplementation; the runner
re-executes the anchor levels live and re-verifies every stored witness and every supplied
certificate against the definitions.
The census engines, the closure oracles, the lattice membership test, the exact Frobenius certificate,
and a standalone verifier of every closure identity in this paper are provided as ancillary files
(\texttt{anc/}; \texttt{run\_all.sh} reproduces the smoke tier --- identities, census anchors, both
lattice verdicts, the Theorem~\ref{thm:C} certificate, the engine calibrations, the checksum
manifest, the re-verification of all stored per-orbit witnesses (every stored class re-derived to
be a grade-$3$ Hodge character, the per-level counts and tallies asserted, the survivor negatives
re-screened), the independent modular-kernel lattice witnesses, the independent census at the
anchor levels and at the even key levels, the level-summary cross-check on a fresh sub-range, the
direct brute-force generator on every census level through $m=45$, the even-sector tier and
reader tables, and a second independent implementation of the Theorem~\ref{thm:C} certificate ---
in under a minute on the reference machine whose measured runtimes the ancillary README
reports;
\texttt{run\_full\_census.sh} is the archival tier, the
complete from-scratch independent regeneration of all $89$ levels against the pinned summary
hashes; the README records the reference configuration and the measured runtimes quoted in
\S\ref{sec:B}), together with the deep-closure ledger and the stored degree-$210$ certificate as data
artifacts; an algorithmically independent census reimplementation is bundled --- its complete $89$-level
run ships as a checksummed receipt, and the runner re-executes it live at the anchor levels ---
and the original implementation used during the verification campaign is methodologically
independent of both --- a historical instrument, \emph{not} part of the shipped reproducible
chain; no claim in this paper rests on it. \emph{Data and code availability.} The complete package --- engines, verifiers, certificates,
receipts, checksum manifest, pinned dependencies, and a single-command runner --- is submitted
as ancillary files with this manuscript and is licensed for reuse (see \texttt{LICENSE} there);
at acceptance it will additionally be deposited in a public archive under a permanent
identifier (DOI) as an immutable release, with the checksums of this submission. This
manuscript was prepared with AI assistance; all computational claims are backed by the
ancillary code and receipts, and the author takes full responsibility for all proofs, code, and
bibliographic claims.

\appendix
\section{The cited statements of da Silva and Aoki}\label{app:aoki}

For self-containedness we reproduce the exact statements used, in the numbering and notation of
the primary texts.

\emph{From \cite{daSilva}} (his $U^r_m$ is the set of characters of $X^r_m$, $C^n_m$ the
subset whose eigenspaces are spanned by algebraic cycle classes, $B^n_m$ the Hodge characters;
$\langle\;\rangle$ and the semigroup $M_m$ are as in our Notation):

\begin{itemize}
\item[--] \textbf{Theorem 2.2.} \emph{Let $n=r+s$ with $r,s\ge1$. Then there is an
isomorphism}
\[
f\colon\bigl[H^{r}_{\mathrm{prim}}(X^r_m,\C)\otimes H^{s}_{\mathrm{prim}}(X^s_m,\C)
\bigr]^{\mu_m}\ \oplus\ H^{r-1}_{\mathrm{prim}}(X^{r-1}_m,\C)\otimes
H^{s-1}_{\mathrm{prim}}(X^{s-1}_m,\C)\ \xrightarrow{\ \sim\ }\ H^n_{\mathrm{prim}}(X^n_m,\C)
\]
\emph{with the following properties: {\rm(a)} $f$ is $G^n_m$-equivariant; {\rm(b)} $f$ is a
morphism of Hodge structures of type $(0,0)$ on the first summand and of type $(1,1)$ on the
second; {\rm(c)} if $n=2p$ then $f$ preserves algebraic cycles, moreover if
$Z_1\otimes Z_2\in H^{r-1}_{\mathrm{prim}}(X^{r-1}_m,\C)\otimes
H^{s-1}_{\mathrm{prim}}(X^{s-1}_m,\C)$ then $f(Z_1\otimes Z_2)=m\,Z_1\wedge Z_2$, where
$Z_1\wedge Z_2$ is the algebraic cycle obtained by joining $Z_1$ and $Z_2$ by lines on
$X^n_m$.} --- Lemma~\ref{lem:transport} uses the \emph{second} summand at $r=s=2$, $n=4$:
$H^1_{\mathrm{prim}}(X^1_m)^{\otimes2}\to H^4_{\mathrm{prim}}(X^4_m)$, of type $(1,1)$ by (b),
carrying algebraic classes to algebraic classes by (c). No $\mu_m$-invariance is imposed on that
summand, and no hypothesis on the characters enters.
\item[--] \textbf{Notation before Cor.~2.3.} \emph{$U^{r,s}_m=\{(\beta,\gamma)\in
U^r_m\times U^s_m\mid\beta=(b_0,\dots,b_{r+1}),\ \gamma=(c_0,\dots,c_{s+1}),\
b_{r+1}+c_{s+1}=0\}$, and for $(\beta,\gamma)\in U^{r,s}_m$,
$\beta\#\gamma=(b_0,\dots,b_r,c_0,\dots,c_s)$.}
\item[--] \textbf{Corollary 2.3.} \emph{Suppose $n=2p=r+s$, where $r,s\ge1$.
{\rm(a)} If $r,s$ are odd and $(\beta',\gamma')\in C^{r-1}_m\times C^{s-1}_m$, then
$\beta'*\gamma'\in C^n_m$. {\rm(b)} If $r,s$ are even and
$(\beta,\gamma)\in(C^r_m\times C^s_m)\cap U^{r,s}_m$, then $\beta\#\gamma\in C^n_m$.}
--- the full hypothesis list of (b): $r,s$ even, both factors of known claim, and the matching
condition $b_{r+1}+c_{s+1}=0$. Pairwise distinctness of entries is \emph{not} among them,
which is what Remark~\ref{rem:selfpair} uses.
\item[--] \textbf{Definition 2.4.} \emph{An element $a\in M_m$ is called decomposable if
$a=c+d$ for some $c,d\in M_m$, otherwise indecomposable. An element is called
quasi-decomposable if $a+b=c+d$ for some $b\in M_m(1)$ and $c,d\in M_m$ with $c,d\neq a$.}
($M_m$ is his semigroup of \emph{non-negative} solutions, as recalled in the Notation; the
clause $c,d\neq a$ is the non-triviality used in Lemma~\ref{lem:complete}(ii).)
\end{itemize} \emph{Notation translation:} Aoki's $\mathfrak B^n_m$ is our $B^n_m$ (Hodge characters
of $X^n_m$); his $\mathfrak D^n_m$ is our $\Ds^n_m$; his \textsc{claim} is our
$\mathrm{claim}$; his $*$ is our juxtaposition; his multiplicity vector $u(\alpha)$ is our
$\nu(\alpha)$ (we reserve $u$ for the local Frobenius scalar); his $S_m$ and gap group are the
objects of Proposition~\ref{prop:ident}. Indices: he writes $n$ for the cohomological degree,
so his $X^n_m$ with $n=4$ is our fourfold and $n=2$ its surface.

\emph{From \cite{Aoki87}} (notation as there: $\mathfrak B^n_m$ the Hodge characters of $X^n_m$,
$\mathfrak D^n_m$ the classes that are coordinate permutations of $(a_0,-a_0,\dots,a_r,-a_r)$,
$r=n/2$; $*$ juxtaposition; \textsc{claim}$(\alpha)$: $\V(\alpha)$ is generated by classes of
algebraic cycles on $X^n_m$):

\begin{itemize}
\item[--] \textbf{Theorem 1-1} (Shioda). \emph{The linear space $L$ represents
$\delta\in\mathfrak D^n_m$; more precisely
$\omega_\delta(L)\cdot\overline{\omega_\delta(L)}=(-1)^r m^{n+1}$.}
\item[--] \textbf{Theorem 1-4} (Shioda \cite{Shioda79}, Ran \cite{Ran}). \emph{Let $r$ and $s$ be non-negative even integers
such that $n=r+s+2$, and let $\alpha\in\mathfrak B^r_m$, $\beta\in\mathfrak B^s_m$. Then:
(i) if \textup{\textsc{claim}}$(\alpha)$ and \textup{\textsc{claim}}$(\beta)$ are true, then
\textup{\textsc{claim}}$(\alpha*\beta)$ is also true; (ii) if there exists $\delta\in\mathfrak D^s_m$ such
that \textup{\textsc{claim}}$(\alpha*\delta)$ is true, then \textup{\textsc{claim}}$(\alpha)$ is also true.}
\item[--] \textbf{Theorem 2-1.} \emph{For an odd prime $p\mid m$, $d=m/p$, $\gcd(a,d)=1$, the
variety $Y$ defined by (2.1) there is a subvariety of $X^{p-1}_m$ of codimension $r=(p-1)/2$
representing $\alpha=\sigma_{p,a}$; more precisely
$\omega_\alpha(Y)\cdot\overline{\omega_\alpha(Y)}=(-1)^r p^{p-2}m^p$.}
\item[--] \textbf{\S1 definition.} \emph{For a prime $p\mid m$, $d=m/p$, and each $i$ with
$d/(i,d)>2$: $\sigma_{p,i}=(i,\,i+d,\dots,i+(p-1)d,\;m-pi)$ for $p\ge3$ --- the ($p$-)standard
elements.}
\end{itemize}

\emph{From \cite{Aoki00}} (verified verbatim against the open-access copy in the Rikkyo
University repository, doi:10.14992/00009819). Two errata of the printed text, both verified
against the primary and neither affecting the statements as used here: the companion CMUSP
\textbf{51} (2002) paper's attribution line after its Theorem~1.2 assigns part (ii)
(= Theorem~0.1(i) below) to the Catalan-curves reference; and the $m=28$ gap-generator entry
$\xi_{28}$ on p.~185 is a misprint (not a zero-sum character) --- a machine-verified
\emph{possible replacement} of exactly the join form the text requires
($\xi_m\in B^2\cup(B^4\cap(\mathfrak A^1*\mathfrak A^1))$) is
$\xi_{28}=(1,9,18)*(10,21,25)$, with $\nu\notin S_{28}$, $2\nu\in S_{28}$ (whether it is the
intended generator modulo $S_{28}$ is not established; the $168$ closure of \S\ref{sec:even}
does not depend on it):

\begin{itemize}
\item[--] \textbf{Theorem 0.1.} \emph{Suppose the prime factorization of $m$ is one of the
following forms: (i) $m=2^a3^b5^c7^d$, where $a,b,c,d$ are non-negative integers such that either
$c=0$ or $d=0$; (ii) $m=p^e$ or $2p^e$, $p$ an odd prime. Then the Hodge conjecture is true for
all abelian varieties of Fermat type of degree $m$.} Its proof (\S4 there) opens: ``In view of
Corollary~3.2, it suffices to show that the Hodge conjecture for $X^n_m$ is true for all $n$'' ---
the bridge to the Fermat varieties themselves used in \S\ref{sec:even}; the
verification then reduces, via Lemma~4.1 and Theorem~2.1 there, to the gap generators $\xi_m$ at
$m=12,15,20,21,28$ (the $m=28$ entry being the misprint recorded above).
\item[--] \textbf{Lemma 4.1} (verbatim). \emph{Let $\alpha,\beta\in B_m$. Then:
(i) if both $\V(\alpha)$ and $\V(\beta)$ are algebraic, then so is $\V(\alpha*\beta)$;
(ii) if $\V(\alpha*\delta)$ is algebraic for some $\delta\in D_m$, then so is $\V(\alpha)$;
(iii) if $\alpha\in S_m$, then $\V(\alpha)$ is algebraic.} (Here $B_m=\bigcup_nB^n_m$ and
$\V(\alpha)\subset H^n_{\mathrm{prim}}(X^n_m)$ for the $n$ with $\#\alpha=n+2$; we use it at
$n=4$.) No hypothesis on $m$ is imposed; the
groups $S_m$ and the ``gap group'' are Aoki's own objects (\S2 there), and
Proposition~\ref{prop:ident} identifies the implemented lattice with $S_m$
generator-by-generator (his \S2 also proves, via Theorem~2.1, that $2\alpha\in S_m$ for every
$\alpha\in B_m$ --- the doubling our $2u$-certificates instantiate).
\end{itemize}

\section{\texorpdfstring{The implemented lattice is Aoki's $S_m$}{The implemented lattice is Aoki's S\_m}}\label{app:ident}

Proved here because the odd-degree novelty labels of \S\ref{sec:intro} rest on it (\S\ref{sec:even}).

\begin{proposition}[the implemented lattice is Aoki's $S_m$]\label{prop:ident}
Aoki \cite[\S2]{Aoki00} defines $A_m$ as the zero-sum sublattice of the free abelian group on
$\Z/m\smallsetminus\{0\}$; $u$ --- our $\nu$ --- as the multiplicity-vector map, additive
under juxtaposition, $\nu(\alpha*\beta)=\nu(\alpha)+\nu(\beta)$; the standard elements,
verbatim, as
$\sigma_{p,a}=(a,\,a+\frac mp,\dots,a+\frac{(p-1)m}p,\,m-pa)$ for $p\ge3$ and
$\sigma_{2,a}=(a,\,a+\frac m2,\,m-2a,\,\frac m2)$ for $p=2$, over primes $p\mid m$ ($m>p$) and
$0<a<\frac mp$ with nonzero entries; $\mathfrak S_m$ as the set of $\alpha$ admitting
$\alpha*\delta\sim\sigma_1*\cdots*\sigma_k*\delta'$ with $\delta,\delta'$ unions of vanishing
pairs, with $\mathfrak D_m\subset\mathfrak S_m$; and $S_m\subset A_m$ as the subgroup generated
by $\nu(\mathfrak S_m)$. Consequently $S_m$ equals the $\Z$-span of the pair vectors
$\nu(\{k,m-k\})$ and the standard vectors $\nu(\sigma_{p,a})$, $p=2$ included --- exactly the
generator list of the implemented lattice --- so the machine verdicts $\nu\in S_m$ /
$\nu\notin S_m$ are statements about Aoki's group itself.
\end{proposition}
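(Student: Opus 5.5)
The plan is a two-inclusion argument comparing generating sets. Write $L_m$ for the implemented lattice, the $\Z$-span of the pair vectors $\nu(\{k,m-k\})$ and the standard vectors $\nu(\sigma_{p,a})$, $p=2$ included. I will show $L_m\subseteq S_m$ and $S_m\subseteq L_m$ directly from Aoki's definition of $\mathfrak S_m$ and the additivity $\nu(\alpha*\beta)=\nu(\alpha)+\nu(\beta)$ recorded in the Notation.

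For $L_m\subseteq S_m$, I show that every generator of $L_m$ is $\nu$ of an element of $\mathfrak S_m$. A vanishing pair $\{k,m-k\}$ lies in $\mathfrak D_m\subset\mathfrak S_m$; this inclusion is stated in Aoki's \S2. A standard element $\sigma_{p,a}$ lies in $\mathfrak S_m$ by the trivial presentation $\sigma_{p,a}*\delta\sim\sigma_{p,a}*\delta$, taking $k=1$ and $\delta=\delta'$ any union of vanishing pairs (possibly empty, if Aoki allows it). Hence every generator of $L_m$ lies in $\nu(\mathfrak S_m)$.

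For $S_m\subseteq L_m$, take $\alpha\in\mathfrak S_m$ with $\alpha*\delta\sim\sigma_1*\cdots*\sigma_k*\delta'$. Multiplicity vectors are invariant under coordinate permutation ($\sim$) and additive under $*$. Applying $\nu$ therefore gives
\[
\nu(\alpha)=\sum_i\nu(\sigma_i)+\nu(\delta')-\nu(\delta).
\]
Since $\delta$ and $\delta'$ are unions of vanishing pairs, $\nu(\delta)$ and $\nu(\delta')$ are sums of pair vectors. Each $\sigma_i$ is a standard element, so $\nu(\alpha)\in L_m$. As $S_m$ is generated by $\nu(\mathfrak S_m)$, this gives $S_m\subseteq L_m$.

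The remaining step is bookkeeping, and it is where I expect the only real care to be needed. I must check that the implemented parametrization of standard vectors matches Aoki's verbatim family: primes $p\mid m$ with $m>p$, $0<a<m/p$, nonzero entries, and the $p=2$ family $\sigma_{2,a}=(a,a+\frac m2,m-2a,\frac m2)$ at even $m$. Two conventions have to be reconciled. The first is the range of $a$ versus the \S1 admissibility condition $d/\gcd(x,d)>2$ of \cite{Aoki87}; Proposition~\ref{prop:D} handles odd $m$, where the two coincide. The second is the even-$m$ excluded elements $\sigma_{p,d/2}$, which Remark~\ref{rem:evenstd} shows are already in the span, so the spans agree. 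Once the generator lists match, the proposition follows, and the machine verdicts $\nu\in S_m$ or $\nu\notin S_m$ are verdicts about Aoki's $S_m$ itself.
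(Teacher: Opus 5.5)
Your proposal is correct and follows the paper's proof: one inclusion places pair classes in $\mathfrak D_m\subset\mathfrak S_m$ and standard elements in $\mathfrak S_m$ via the trivial presentation $\sigma*\delta\sim\sigma*\delta$, and the other applies additivity of $\nu$ to $\alpha*\delta\sim\sigma_1*\cdots*\sigma_k*\delta'$. Your closing bookkeeping paragraph is not needed: the implemented family is Aoki's \S2 parametrization verbatim (nonzero entries only, so it already contains the elements $\sigma_{p,d/2}$ that the 1987 admissibility condition excludes), so neither that condition nor the finitely-many-level check of Remark~\ref{rem:evenstd} enters the identification.
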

\begin{proof}
$\supseteq$: every pair class lies in $\mathfrak D_m\subset\mathfrak S_m$, and every standard
element $\sigma$ satisfies $\sigma*\delta\sim\sigma*\delta$, so $\sigma\in\mathfrak S_m$: each
listed generator is in $\nu(\mathfrak S_m)$. $\subseteq$: if
$\alpha*\delta\sim\sigma_1*\cdots*\sigma_k*\delta'$, then
$\nu(\alpha)=\sum_i\nu(\sigma_i)+\nu(\delta')-\nu(\delta)$, an integer combination of the listed
generators. The two spans coincide.
\end{proof}

\section{The census, level by level}\label{app:census}

\phantomsection\label{tab:census}

Every odd level of the census list, taken from the shipped summary table
(\texttt{census\_level\_\allowbreak summaries.json}; each row there also carries the SHA-256 of
its canonically sorted representative list). Columns: number of Galois-orbit representatives; decomposable;
quasi-decomposable; Aoki standard; $*$-split (Theorem~\ref{thm:A}); closed by
Theorem~\ref{thm:Aprime}/\ref{thm:App} or level transport; and remaining --- zero at every
level, which is Theorem~\ref{thm:B}.

\begin{center}\scriptsize
\setlength{\tabcolsep}{3pt}
\begin{tabular}{rrrrrrrr|rrrrrrrr}
\toprule
$m$ & orb & D & QD & S & $*$ & A$'$/A$''$ & rem & $m$ & orb & D & QD & S & $*$ & A$'$/A$''$ & rem \\
\midrule
$21$ & $64$ & $61$ & $3$ & $0$ & $0$ & $0$ & $0$ & $113$ & $551$ & $551$ & $0$ & $0$ & $0$ & $0$ & $0$ \\
$25$ & $39$ & $38$ & $0$ & $1$ & $0$ & $0$ & $0$ & $115$ & $772$ & $771$ & $0$ & $1$ & $0$ & $0$ & $0$ \\
$27$ & $68$ & $67$ & $1$ & $0$ & $0$ & $0$ & $0$ & $117$ & $1117$ & $1113$ & $2$ & $0$ & $2$ & $0$ & $0$ \\
$29$ & $40$ & $40$ & $0$ & $0$ & $0$ & $0$ & $0$ & $119$ & $776$ & $776$ & $0$ & $0$ & $0$ & $0$ & $0$ \\
$31$ & $46$ & $46$ & $0$ & $0$ & $0$ & $0$ & $0$ & $121$ & $694$ & $694$ & $0$ & $0$ & $0$ & $0$ & $0$ \\
$33$ & $104$ & $102$ & $1$ & $0$ & $0$ & $1$ & $0$ & $123$ & $1091$ & $1090$ & $1$ & $0$ & $0$ & $0$ & $0$ \\
$35$ & $92$ & $91$ & $0$ & $1$ & $0$ & $0$ & $0$ & $125$ & $866$ & $864$ & $0$ & $2$ & $0$ & $0$ & $0$ \\
$37$ & $64$ & $64$ & $0$ & $0$ & $0$ & $0$ & $0$ & $127$ & $694$ & $694$ & $0$ & $0$ & $0$ & $0$ & $0$ \\
$39$ & $140$ & $137$ & $1$ & $0$ & $2$ & $0$ & $0$ & $129$ & $1195$ & $1194$ & $1$ & $0$ & $0$ & $0$ & $0$ \\
$41$ & $77$ & $77$ & $0$ & $0$ & $0$ & $0$ & $0$ & $131$ & $737$ & $737$ & $0$ & $0$ & $0$ & $0$ & $0$ \\
$43$ & $85$ & $85$ & $0$ & $0$ & $0$ & $0$ & $0$ & $133$ & $960$ & $960$ & $0$ & $0$ & $0$ & $0$ & $0$ \\
$45$ & $245$ & $237$ & $6$ & $1$ & $0$ & $1$ & $0$ & $135$ & $1767$ & $1756$ & $8$ & $2$ & $0$ & $1$ & $0$ \\
$47$ & $100$ & $100$ & $0$ & $0$ & $0$ & $0$ & $0$ & $137$ & $805$ & $805$ & $0$ & $0$ & $0$ & $0$ & $0$ \\
$49$ & $128$ & $128$ & $0$ & $0$ & $0$ & $0$ & $0$ & $139$ & $829$ & $829$ & $0$ & $0$ & $0$ & $0$ & $0$ \\
$51$ & $217$ & $216$ & $1$ & $0$ & $0$ & $0$ & $0$ & $141$ & $1414$ & $1413$ & $1$ & $0$ & $0$ & $0$ & $0$ \\
$53$ & $126$ & $126$ & $0$ & $0$ & $0$ & $0$ & $0$ & $143$ & $1062$ & $1062$ & $0$ & $0$ & $0$ & $0$ & $0$ \\
$55$ & $197$ & $196$ & $0$ & $1$ & $0$ & $0$ & $0$ & $145$ & $1204$ & $1203$ & $0$ & $1$ & $0$ & $0$ & $0$ \\
$57$ & $265$ & $264$ & $1$ & $0$ & $0$ & $0$ & $0$ & $147$ & $1816$ & $1812$ & $4$ & $0$ & $0$ & $0$ & $0$ \\
$59$ & $155$ & $155$ & $0$ & $0$ & $0$ & $0$ & $0$ & $149$ & $950$ & $950$ & $0$ & $0$ & $0$ & $0$ & $0$ \\
$61$ & $166$ & $166$ & $0$ & $0$ & $0$ & $0$ & $0$ & $151$ & $976$ & $976$ & $0$ & $0$ & $0$ & $0$ & $0$ \\
$63$ & $407$ & $400$ & $7$ & $0$ & $0$ & $0$ & $0$ & $153$ & $1816$ & $1814$ & $2$ & $0$ & $0$ & $0$ & $0$ \\
$65$ & $268$ & $267$ & $0$ & $1$ & $0$ & $0$ & $0$ & $155$ & $1370$ & $1369$ & $0$ & $1$ & $0$ & $0$ & $0$ \\
$67$ & $199$ & $199$ & $0$ & $0$ & $0$ & $0$ & $0$ & $157$ & $1054$ & $1054$ & $0$ & $0$ & $0$ & $0$ & $0$ \\
$69$ & $372$ & $371$ & $1$ & $0$ & $0$ & $0$ & $0$ & $159$ & $1780$ & $1779$ & $1$ & $0$ & $0$ & $0$ & $0$ \\
$71$ & $222$ & $222$ & $0$ & $0$ & $0$ & $0$ & $0$ & $161$ & $1379$ & $1379$ & $0$ & $0$ & $0$ & $0$ & $0$ \\
$73$ & $235$ & $235$ & $0$ & $0$ & $0$ & $0$ & $0$ & $163$ & $1135$ & $1135$ & $0$ & $0$ & $0$ & $0$ & $0$ \\
$75$ & $561$ & $556$ & $3$ & $2$ & $0$ & $0$ & $0$ & $165$ & $2743$ & $2736$ & $4$ & $2$ & $0$ & $1$ & $0$ \\
$77$ & $346$ & $346$ & $0$ & $0$ & $0$ & $0$ & $0$ & $167$ & $1190$ & $1190$ & $0$ & $0$ & $0$ & $0$ & $0$ \\
$79$ & $274$ & $274$ & $0$ & $0$ & $0$ & $0$ & $0$ & $169$ & $1322$ & $1322$ & $0$ & $0$ & $0$ & $0$ & $0$ \\
$81$ & $504$ & $502$ & $2$ & $0$ & $0$ & $0$ & $0$ & $171$ & $2236$ & $2234$ & $2$ & $0$ & $0$ & $0$ & $0$ \\
$83$ & $301$ & $301$ & $0$ & $0$ & $0$ & $0$ & $0$ & $173$ & $1276$ & $1276$ & $0$ & $0$ & $0$ & $0$ & $0$ \\
$85$ & $437$ & $436$ & $0$ & $1$ & $0$ & $0$ & $0$ & $175$ & $2015$ & $2012$ & $0$ & $3$ & $0$ & $0$ & $0$ \\
$87$ & $570$ & $569$ & $1$ & $0$ & $0$ & $0$ & $0$ & $177$ & $2187$ & $2186$ & $1$ & $0$ & $0$ & $0$ & $0$ \\
$89$ & $345$ & $345$ & $0$ & $0$ & $0$ & $0$ & $0$ & $179$ & $1365$ & $1365$ & $0$ & $0$ & $0$ & $0$ & $0$ \\
$91$ & $473$ & $473$ & $0$ & $0$ & $0$ & $0$ & $0$ & $181$ & $1396$ & $1396$ & $0$ & $0$ & $0$ & $0$ & $0$ \\
$93$ & $646$ & $645$ & $1$ & $0$ & $0$ & $0$ & $0$ & $183$ & $2334$ & $2333$ & $1$ & $0$ & $0$ & $0$ & $0$ \\
$95$ & $539$ & $538$ & $0$ & $1$ & $0$ & $0$ & $0$ & $185$ & $1930$ & $1929$ & $0$ & $1$ & $0$ & $0$ & $0$ \\
$97$ & $409$ & $409$ & $0$ & $0$ & $0$ & $0$ & $0$ & $187$ & $1761$ & $1761$ & $0$ & $0$ & $0$ & $0$ & $0$ \\
$99$ & $825$ & $822$ & $2$ & $0$ & $0$ & $1$ & $0$ & $189$ & $3100$ & $3091$ & $9$ & $0$ & $0$ & $0$ & $0$ \\
$101$ & $442$ & $442$ & $0$ & $0$ & $0$ & $0$ & $0$ & $191$ & $1552$ & $1552$ & $0$ & $0$ & $0$ & $0$ & $0$ \\
$103$ & $460$ & $460$ & $0$ & $0$ & $0$ & $0$ & $0$ & $193$ & $1585$ & $1585$ & $0$ & $0$ & $0$ & $0$ & $0$ \\
$105$ & $1286$ & $1276$ & $6$ & $2$ & $0$ & $2$ & $0$ & $195$ & $3715$ & $3707$ & $4$ & $2$ & $2$ & $0$ & $0$ \\
$107$ & $495$ & $495$ & $0$ & $0$ & $0$ & $0$ & $0$ & $197$ & $1650$ & $1650$ & $0$ & $0$ & $0$ & $0$ & $0$ \\
$109$ & $514$ & $514$ & $0$ & $0$ & $0$ & $0$ & $0$ & $199$ & $1684$ & $1684$ & $0$ & $0$ & $0$ & $0$ & $0$ \\
$111$ & $900$ & $899$ & $1$ & $0$ & $0$ & $0$ & $0$ &  & & & & & & &  \\
\bottomrule
\end{tabular}
\end{center}

\noindent Totals over the $89$ levels: $78{,}299$ orbit representatives, split as
$78{,}181$ decomposable, $79$ quasi-decomposable, $26$ standard, $6$ $*$-split and $7$
beyond-machinery --- and $0$ remaining at every level.

\section{Computational outlook: the even-degree boundary through degree 250}\label{sec:even}

\emph{This appendix is independent of Theorems~\ref{thm:A}--\ref{thm:C} and of the odd census;
it records a computed map whose systematic treatment is the companion paper \cite{BNeven}.}

The odd-degree theorems are complete. The even-degree boundary is a different landscape, and this
closing section only \emph{reports its computed map} --- engines, closure ledger, certificates,
and pinned receipts ship in the ancillary --- deferring systematic treatment (full statements and
proofs of the closure and wall structure) to the companion paper \cite{BNeven}, with which this
note shares its ancillary package. Nothing in Theorems~\ref{thm:A}--\ref{thm:C} or in the odd
census depends on this section, and every computational claim below is backed
receipt-by-receipt within the ancillary of \emph{this} submission --- the companion adds
statements and proofs, not data. All censuses use the
parity-corrected decomposability test (the self-paired element $m/2$; part~(i) of
Lemma~\ref{lem:complete} covers both parities); every lattice verdict below carries an
independent certificate in the ancillary --- a modular kernel witness for each non-membership, a
re-summed integer combination for each membership. One statement --- the identification of the implemented lattice with Aoki's $S_m$ --- is
load-bearing also for the odd-degree novelty classification above, and is therefore proved in
this paper, in Appendix~\ref{app:ident} (Proposition~\ref{prop:ident}).

\emph{The computed map} {\rm(computed; engines, closure ledger, and receipts in the ancillary;
full statements and proofs in \cite{BNeven})}. The even census finds forty primitive
orbits surviving the four base predicates \emph{and} the complete depth-$\le2$ closure oracle
through degree $250$ ($21$ at $m\le108$, the first at $m=50$ --- the odd law ``survivor
$\Rightarrow3\mid m$'' fails in the even sector --- then $14$ through $200$ and $5$ more
through $250$). The tier matters: unlike the odd census, where the base predicates alone leave
only the seven orbits of Theorem~\ref{thm:B}, the even base tier is far larger (first survivor at
$m=32$), and all of it except those forty closes at depth $\le2$; the tiers are defined
and tabulated in \cite{BNeven}. Iterated partition certificates of the
Theorem~\ref{thm:Aprime}/\ref{thm:App} kind close all but ten, and an exhaustive scan of
claim-equivalence moves organizes the residual into three walls. Two more classes then close:
$m=168$ by a coset transfer --- its $\nu$ is congruent modulo $S_{168}$ to that of an explicit
$*$-split class, which Theorem~\ref{thm:A} closes (Aoki's degree-form Theorem~0.1(i)
\cite{Aoki00} also covers $168=2^3\cdot3\cdot7$; the bridge is quoted in
Appendix~\ref{app:aoki}, together with two verified errata of the printed text, and neither is
relied upon) --- and the isolated degree-$210$
class closes by an explicit $40$-generator $\pm1$ lattice certificate $\nu\in S_{210}$ (pairs
and standard elements only, converted into a literal Aoki chain by the negation closure), whence
$\V$ is algebraic by Theorems~1-4(i),(ii) of \cite{Aoki87}. What remains \emph{unresolved by
the implemented closure family and the cited published theorems} is eight certified gap classes
--- all with $\nu\notin S_m$, $2\nu\in S_m$ (the doubling holds for every Hodge class by
\cite[Thm.~2.1]{Aoki00}), all outside the degree forms of Aoki's theorem --- in three claim-equivalence walls
$W_{70},W_{110},W_{114}$ (labelled by their lowest level; a wall spans several levels).

Qualitatively, the even sector differs by exactly the feature that made Theorem~\ref{thm:App}
work: the self-paired element $m/2$ exists only at even levels
(Remark~\ref{rem:selfpair}), so quasi-decomposition has a channel there that no odd level
provides --- which is why the odd boundary closes and the even one, at present, does not. The
wall table, the per-class dossiers, the deep negative sweeps, and the independent even-parity
verification are in \cite{BNeven}.


\begin{thebibliography}{99}
\raggedright
\bibitem{Aoki87} N.~Aoki, \emph{Some new algebraic cycles on Fermat varieties}, J.~Math.\ Soc.\ Japan
\textbf{39} (1987), no.~3, 385--396; doi:10.2969/jmsj/03930385.
\bibitem{Aoki00} N.~Aoki, \emph{Some remarks on the Hodge conjecture for abelian varieties of Fermat
type}, Comment.\ Math.\ Univ.\ St.\ Paul.\ \textbf{49} (2000), no.~2, 177--194;
doi:10.14992/00009819 (open access, Rikkyo University repository).
\bibitem{daSilva} G.~da Silva Jr., \emph{Notes on the Hodge conjecture for Fermat varieties},
Experimental Results \textbf{2} (2021), e22, doi:10.1017/exp.2021.14; arXiv:2101.04739.
\bibitem{P1} R.~Jumagulov, \emph{Galois-invariant N\'eron--Severi ranks of Fermat surfaces over
number fields: a Galois module, closed forms, a threshold, and exact tables}, arXiv:2607.17387.
\bibitem{BNeven} R.~Jumagulov, \emph{Residual gap classes in the even-degree Fermat-fourfold
census through degree 250: exchange walls and two closures}, companion paper, 2026 (submitted
alongside; the ancillary package is shared).
\bibitem{FloccariFu} S.~Floccari, L.~Fu, \emph{The Hodge conjecture for Weil fourfolds with
discriminant $1$ via singular OG6-varieties}, J.~Math.\ Pures Appl.\ \textbf{210} (2026),
103876; doi:10.1016/j.matpur.2026.103876.
\bibitem{Markman1} E.~Markman, \emph{Cycles on abelian $2n$-folds of Weil type from secant sheaves
on abelian $n$-folds}, arXiv:2502.03415.
\bibitem{Markman2} E.~Markman, \emph{Secant sheaves on abelian $n$-folds with real multiplication
and Weil classes on abelian $2n$-folds with complex multiplication}, arXiv:2509.23079.
\bibitem{Ran} Z.~Ran, \emph{Cycles on Fermat hypersurfaces}, Compositio Math.\ \textbf{42}
(1980/81), no.~1, 121--142.
\bibitem{Schoen} C.~Schoen, \emph{Hodge classes on self-products of a variety with an automorphism},
Compositio Math.\ \textbf{65} (1988), no.~1, 3--32; Addendum, \textbf{114} (1998), no.~3,
329--336; doi:10.1023/A:1000566205021.
\bibitem{Shioda79} T.~Shioda, \emph{The Hodge conjecture for Fermat varieties}, Math.\ Ann.\ \textbf{245}
(1979), no.~2, 175--184; doi:10.1007/BF01428804.
\bibitem{SK} T.~Shioda, T.~Katsura, \emph{On Fermat varieties}, T\^ohoku Math.\ J.\ (2)
\textbf{31} (1979), no.~1, 97--115; doi:10.2748/tmj/1178229881.
\bibitem{vGR} B.~van Geemen, A.~Rapagnetta, \emph{Hyperk\"ahler sixfolds, abelian fourfolds of
Weil type and a Hodge class}, arXiv:2607.18341.
\bibitem{Weil49} A.~Weil, \emph{Numbers of solutions of equations in finite fields}, Bull.\ Amer.\
Math.\ Soc.\ \textbf{55} (1949), 497--508; doi:10.1090/S0002-9904-1949-09219-4.
\bibitem{Weil52} A.~Weil, \emph{Jacobi sums as ``Gr\"ossencharaktere''}, Trans.\ Amer.\ Math.\ Soc.\
\textbf{73} (1952), 487--495; doi:10.2307/1990804.
\end{thebibliography}
\end{document}